\documentclass[11pt,a4paper,twoside]{amsart}
\usepackage{amsfonts,amssymb,amscd,amsmath,amsthm, enumerate}
\usepackage{graphicx}
\title{Absolutely compatible pairs in a von Neumann algebra}

\author{Nabin K. Jana, Anil K. Karn}
\address{School of Mathematical Science, National Institute of Science Education and
Research, HBNI, Bhubaneswar, At \& Post - Jatni, PIN - 752050, India.}
\email{nabinjana@niser.ac.in; anilkarn@niser.ac.in}

\author[A.M. Peralta]{Antonio M. Peralta}
\address{Departamento de An{\'a}lisis Matem{\'a}tico, Facultad de
Ciencias, Universidad de Granada, 18071 Granada, Spain.}
\email{aperalta@ugr.es}

\subjclass[msc2010]{Primary 46L10; Secondary 46B40 46L05.}

\keywords{Absolute compatibility, commutativity, C$^{\ast}$-algebra, von Neumann algebra, projection, partial isometry, linear absolutely compatible preservers}

\date{}

\newtheorem{theorem}{Theorem}[section]
\newtheorem{lemma}[theorem]{\bf Lemma}
\newtheorem{proposition}[theorem]{\bf Proposition}
\newtheorem{corollary}[theorem]{\bf Corollary}
\newtheorem{definition}[theorem]{\bf Definition}

\newtheorem{remark}[theorem]{\bf Remark}

\DeclareMathOperator{\rank}{rank}
\DeclareMathOperator{\trace}{trace}

\usepackage[colorlinks, bookmarks=true]{hyperref}
\usepackage{color,graphicx,shortvrb}
\usepackage[active]{srcltx} 


\begin{document}

\begin{abstract} Let $a,b$ be elements in a unital C$^*$-algebra with $0\leq a,b\leq 1$. The element $a$ is absolutely compatible with $b$ if $$\vert a - b \vert + \vert 1 - a - b \vert = 1.$$ In this note we find some technical characterizations of absolutely compatible pairs in an arbitrary von Neumann algebra. These characterizations are applied to measure how close is a pair of absolute compatible positive elements in the closed unit ball from being orthogonal or commutative. In the case of 2 by 2 matrices the results offer a geometric interpretation in terms of an ellipsoid determined by one of the points. The conclusions for 2 by 2 matrices are also applied to describe absolutely compatible pairs of positive elements in the closed unit ball of $\mathbb{M}_n$.
\end{abstract}

\maketitle

\section{Introduction}

The relation being orthogonal is a central notion of study in the setting of function algebras and in the non-commutative framework of general C$^*$-algebras. Let us recall that elements $a$ and $b$ in a C$^*$-algebra $A$ are called orthogonal ($a\perp b$ in short) if $a b^* = b^* a =0$. Hermitian elements are orthogonal precisely when they have zero product. Following the standard notation, we shall write $|a| =(a^* a)^{\frac12}$ for the absolute value of $a$.\smallskip

Several attempts to establish a non-commutative version of the celebrated Kakutani's theorem \cite{Kaku41}, which characterizes those Banach lattices which are lattice isomorphic to the space $C(\Omega)$, of all continuous functions on a compact Hausdorff space $\Omega$, have been pursued in recent years (cf. \cite{Karn1, Karn2, Karn3}). As in many previous forerunners, like the representation theory, published by Stone in \cite{Stone1940}, which characterizes $C(\Omega)$ in terms of order and its ring properties, a non-commutative Kakutani's theorem will necessarily rely on the notions of orthogonality, absolute value and order. Some discoveries have been found within this non-commutative program, for example, it is shown in \cite[Proposition 4.9]{Karn2018} that if $a$ is an arbitrary positive element in the closed unit ball, $\mathcal{B}_A$, of a unital C$^*$-algebra $A$ and $p$ is a projection in $A$, then $\vert p - a \vert + \vert 1 - p - a \vert = 1$ if and only if $a$ and $p$ commute. Furthermore, two positive elements $a$ and $b$ in $\mathcal{B}_A$ are orthogonal if, and only if, $a+b\leq 1$ and $\vert a - b \vert + \vert 1 - a - b \vert = 1.$ The second condition gives rise to a strictly weaker notion than the usual orthogonality. Accordingly to the notation in \cite{Karn2018}, given two elements $a,b\in A$ with $0\leq a,b \leq 1,$ we shall say that $a$ is \emph{absolutely compatible} with $b$ ($a\triangle b$ in short) if $$\vert a - b \vert + \vert 1 - a - b \vert = 1.$$ Clearly, $a\perp b$ (with $0\leq a,b\leq 1$) implies that $a\triangle b$. It is shown in \cite[Proposition 4.7]{Karn2018} that $a$ is absolute compatible with $b$ (with $0\leq a,b\leq 1$) if and only if $2 a \circ b = a + b - \vert a - b \vert,$ where $a \circ b = \frac{1}{2} (a b + b a)$ is the usual Jordan product of $a$ and $b$. This notion is applied in \cite{Karn2018} to introduce a spectral theory for absolute order unit spaces satisfying some specific conditions, which generalizes the spectral theory in von Neumann algebras.\smallskip

In a recent contribution we extend the notion of absolute compatibility to pairs of elements in the closed unit ball of an arbitrary (unital) C$^*$-algebra $A$ via absolute values (see \cite{JanKarnPe2018}). In this case we introduce notions which are strictly weaker than range and domain orthogonality. Concretely, elements $a$ and $b$ in $\mathcal{B}_A$ are \emph{domain} (respectively, \emph{range}) \emph{absolutely compatible} {\rm(}$a\triangle_d b$, respectively, $a\triangle_r b$, in short{\rm)} if $|a|$ and $|b|$ (respectively, if $|a^*|$ and $|b^*|$) are absolutely compatible, that is, $\Big| |a| -|b| \Big| + \Big| 1-|a|-|b| \Big| =1$ (respectively, $\Big| |a^*| -|b^*| \Big| + \Big| 1-|a^*|-|b^*| \Big| =1$). Finally, $a$ and $b$ are called \emph{absolutely compatible} {\rm(}$a\triangle b$ in short{\rm)} if they are range and domain absolutely compatible.\smallskip

One of the main results in \cite{JanKarnPe2018} proves that every contractive linear operator $T$ between C$^*$-algebras preserving domain absolutely compatible elements {\rm(}i.e., $a\triangle_d b$ in $\mathcal{B}_A$ $\Rightarrow T(a)\triangle_d T(b)${\rm)} or range absolutely compatible elements {\rm(}i.e., $a\triangle_r b$ in $\mathcal{B}_A$ $\Rightarrow T(a)\triangle_r T(b)${\rm)} is a triple homomorphism. Furthermore, a contractive linear operator between two C$^*$-algebras preserves absolutely compatible elements {\rm(}i.e., $a\triangle b$ in $\mathcal{B}_A$ $\Rightarrow T(a)\triangle T(b)${\rm)} if, and only if, $T$ is a triple homomorphism. Having in mind the extensive literature on bounded linear operators between C$^*$-algebras preserving (domain and/or range) orthogonality (cf., for example, \cite{Wolff94,Wong2005,BurFerGarMarPe2008,LiuChouLiaoWong2018,LiuChouLiaoWong2018b}), the results in \cite{JanKarnPe2018} inaugurate a new line to explore in the framework of preservers.\smallskip

After characterizing triple homomorphisms as contractive linear operators between C$^*$-algebras preserving absolutely compatible elements, it seems natural to explore how close or how far is a pair of absolutely compatible elements to be orthogonality. This comparison is the natural step in order to measure similarities and differences with linear orthogonality preservers. Absolute compatibility is not a mere technical workmanlike extension of previous notions, and will certainly play a role in the theory of preservers.  This paper is aimed to throw some new light to our knowledge on absolutely compatible pairs of positive elements in the closed unit ball of a von Neumann algebra. The first main result (see Theorem \ref{characterization in vN}) is a technical characterization showing that two elements $a,b$ in a von Neumann algebra $M$ with $0\leq a,b\leq 1$, are absolutely compatible if, and only if, denoting by $p_1$ the range projection of  $a\circ b$ in $M$, then $a$ and $b$ have matrix representations,
	say $a = \left( \begin{array}{cc} a_{11} & a_{12} \\  a_{12}^{\ast} &
	a_{22} \end{array} \right),$ and $b = \left( \begin{array}{cc} b_{11} &
	b_{12} \\  b_{12}^{\ast} & b_{22} \end{array} \right)$ with respect to the set
	$\{ p_1, 1-p_1 = p_2\}$ {\rm(}i.e., $a_{ij} = p_i a p_j$ and $b_{ij} = p_i b p_j${\rm)} satisfying certain technical identities. This technical description admits some finer reformulations established in Theorems \ref{comp} and \ref{comp1}.\smallskip

The \emph{range projection} of a positive element $a$ in a von Neumann algebra $M$ will be denoted by $r(a)$.  The \emph{almost strict part} of $a$ is defined as the element $e(a) := a - s(a)$, where $s(a) = 1 - r(1 - a)$ is the support projection of $a$. A non-zero element $0\leq a \leq 1$ in $M$ will be called \emph{strict}, if $s(a) = 0=n(a)$, where $n(a) = 1-r(a)$. The projections $s(a), r(e(a)),$ and $n(a)$ are mutually orthogonal with $s(a) + r(e(a)) + n(a) = 1$, and every element in $M$ admits a matrix decomposition with respect to this system of projections. Furthermore, elements $0\leq a,b\leq 1$ are absolutely compatible if, and only if, there exist $0\leq b_1 \leq s(a)$, $0\leq b_2 \leq r(e(a)),$ and $0\leq b_3 \leq n(a)$ such that $b_2$ is absolutely compatible with $e(a)$ and $b = b_1 + b_2 + b_3$ (see Theorem \ref{comp}).\smallskip

These characterizations are subsequently applied to determine when a pair of absolutely compatible elements in a von Neumann algebra is a commuting pair. When particularized to the von Neumann algebra $\mathbb{M}_n,$ of all $n\times n$ matrices with complex entries, the conclusions offer some interesting geometric interpretations. Commuting pairs of absolutely compatible positive elements in $\mathcal{B}_{\mathbb{M}_2}$ are described in Proposition \ref{p AC pairs in M2}. All possible pairs of absolutely compatible positive elements in $\mathcal{B}_{\mathbb{M}_2},$ in which one of the elements is not strict are considered in Proposition \ref{non-strict}. It is also shown that non-commuting strict matrices $0\leq a, b\leq 1$ in $\mathbb{M}_2$ are absolutely compatible if, and only if, $\det (a)>0,\; \det(b)>0$; $\trace(a) =1= \trace(b)$ and $\det(a\circ b) =0$ (see  Theorem \ref{strict}). Finally, (strict) matrices of the form $a = \left(
            \begin{array}{cc}
              t & \alpha\\
              \bar{\alpha} & 1 - t
            \end{array},\right)$ and
$b = \left(
\begin{array}{cc}
x & y + i z \\
y - i z & 1 - x
\end{array} \right)$ with $x,t\in (0,1)\backslash \{\frac12\},$ $|\alpha|^2 <t(1-t)$ and $|y+i z|^2< x(1-x)$, are absolutely compatible if, and only if, the corresponding point $\widetilde{b} = (x,y,z)$ in $\mathbb{R}^3$ lies in the ellipsoid $$\mathcal{E}_a\setminus\{\widetilde{p_a}, \widetilde{p_a'}\} = \{ \overline{x}\in \mathbb{R}^3  : d_2 (\overline{x}, \widetilde{a}) +d_2( \overline{x}, \widetilde{a'}) =1 \},$$ where $d_2$ denotes the Euclidean distance in $\mathbb{R}^3$, $\widetilde{a} = (t,\Re\hbox{e}(\alpha ), \Im\hbox{m}(\alpha ))$, $\widetilde{a'} = (1-t,-\Re\hbox{e}(\alpha ), -\Im\hbox{m}(\alpha ))$, $\widetilde{p_a} = \frac{1}{\lambda_1-\lambda_2} \left( t-\lambda_2, \Re\hbox{e}(\alpha), \Im\hbox{m}(\alpha)\right)$, and $\widetilde{p_a'}= \frac{1}{\lambda_1-\lambda_2} \left(\lambda_1 - t, -\Re\hbox{e}(\alpha), -\Im\hbox{m}(\alpha)\right)$ (see Theorem \ref{ellipticity}).\smallskip

In our final result we prove that absolutely compatible pairs of positive elements in $\mathcal{B}_{\mathbb{M}_n}$ can be represented as orthogonal sums of $2\times 2$ matrices which are pairwise absolutely compatible (cf. Theorem \ref{strict2n}).

\section{Characterisation of positive absolute compatible elements}\label{sec: AC in vN+}

The notion of absolute compatibility was originally introduced in the setting of positive elements in the closed unit ball of a unital C$^*$-algebra (cf. \cite{Karn2018}). Among the results in the just quoted paper we can find the following interested characterization:

\begin{proposition}\label{p 00}{\rm\cite[Propositions 4.5 and 4.7]{Karn2018}}
	Let $A$ be a unital C$^{\ast}$-algebra and let $0\leq a, b \leq 1$ be elements in $A$. Then the following statements are equivalent:
	\begin{enumerate}[$(a)$]
		\item $a$ is absolutely compatible with $b$;
		\item $2 a \circ b = a + b - \vert a - b \vert$;
		\item $a \circ b, (1 - a) \circ (1 - b) \in A^+$ and $( a \circ b ) \Big( (1 - a) \circ (1 - b) \Big) = 0$;
		\item $a \circ (1 - b), (1 - a) \circ b \in A^+$ and $\Big( a \circ (1 - b) \Big) \Big( (1 - a) \circ b \Big) = 0$.
	\end{enumerate}
\end{proposition}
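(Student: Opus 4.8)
The plan is to prove the four conditions equivalent by isolating two ``core'' equivalences, $(a)\Leftrightarrow(b)$ and $(b)\Leftrightarrow(d)$, and then folding in every statement involving $(c)$ through the symmetry $b\mapsto 1-b$. Three elementary observations drive the argument. First, the purely algebraic identities $a\circ(1-b)-(1-a)\circ b = a-b$ and $a\circ(1-b)+(1-a)\circ b = a+b-2\,a\circ b$, together with their complementary versions $a\circ b-(1-a)\circ(1-b)=a+b-1$ and $a\circ b+(1-a)\circ(1-b)=1-a-b+2\,a\circ b$. Second, that $\{(a-b)_+,(a-b)_-\}$ is the \emph{unique} decomposition of the self-adjoint element $a-b$ into orthogonal positive parts. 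Third, that condition $(c)$ for the pair $(a,b)$ is literally condition $(d)$ for the admissible pair $(a,1-b)$, while condition $(a)$ is invariant under $b\mapsto 1-b$, since $|a-(1-b)|+|1-a-(1-b)| = |1-a-b|+|a-b|$.

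For $(a)\Leftrightarrow(b)$ I would argue by squaring. Assuming $(a)$, rewrite $|a-b|+|1-a-b|=1$ as $|1-a-b| = 1-|a-b|$ and square both sides; expanding with $|a-b|^2=(a-b)^2$ and $2\,a\circ b = ab+ba$ turns $(1-a-b)^2=(1-|a-b|)^2$ into $2\,a\circ b = a+b-|a-b|$, which is $(b)$. Conversely, assuming $(b)$, the same expansion shows $(1-a-b)^2=(1-|a-b|)^2$; since $0\le a,b\le 1$ forces $-1\le a-b\le 1$ and hence $1-|a-b|\ge 0$, taking the positive square root of this operator identity yields $|1-a-b| = 1-|a-b|$, which is $(a)$.

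For $(b)\Leftrightarrow(d)$ I would use the Jordan decomposition. Set $Q:=a\circ(1-b)=a-a\circ b$ and $R:=(1-a)\circ b = b-a\circ b$, so that $Q-R=a-b$ unconditionally. If $(b)$ holds then $Q+R = a+b-2\,a\circ b = |a-b|$, whence $Q=\tfrac12(|a-b|+(a-b))=(a-b)_+\ge 0$, $R=(a-b)_-\ge 0$ and $QR=(a-b)_+(a-b)_-=0$, i.e. $(d)$. Conversely, if $(d)$ holds then $\{Q,R\}$ decomposes $a-b$ into orthogonal positive elements, so by uniqueness $Q=(a-b)_+$, $R=(a-b)_-$, giving $Q+R=|a-b|$ and therefore $a+b-2\,a\circ b=|a-b|$, which is $(b)$. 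Condition $(c)$ is then obtained for free: $(c)$ for $(a,b)$ equals $(d)$ for $(a,1-b)$, which by the equivalences just proved (applied to $(a,1-b)$) is equivalent to $(b)$ and hence to $(a)$ for $(a,1-b)$, and the latter coincides with $(a)$ for $(a,b)$ by the invariance noted above.

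I expect the only genuinely delicate point to be the backward implication $(b)\Rightarrow(a)$. The tempting ``direct'' route is to show $a\circ b\ge 0$, $(1-a)\circ(1-b)\ge 0$ and $(a\circ b)\big((1-a)\circ(1-b)\big)=0$; that route really does force one to establish the commutation $[\,|a-b|,\,a\circ b\,]=0$, which is not formal and seems to require a spectral argument (deriving $2C=|a-b|\,C+C\,|a-b|$ for $C:=[\,|a-b|,a\circ b\,]$ and localizing to the spectral part where $|a-b|=1$, on which $a\circ b$ vanishes because there $a=1,b=0$ or $a=0,b=1$). The purpose of the squaring argument is precisely to bypass this obstacle: taking the positive square root of $(1-a-b)^2=(1-|a-b|)^2$ delivers $(a)$ immediately, the only input being the harmless bound $|a-b|\le 1$. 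I would therefore present the squaring route as the main line and relegate the commutation identity to a remark.
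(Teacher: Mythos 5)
Your proposal is correct, but there is nothing in the paper to compare it against: Proposition \ref{p 00} is stated as an imported result, cited from \cite[Propositions 4.5 and 4.7]{Karn2018}, and is used as a black box throughout the paper, so what you have produced is a self-contained proof of a quoted result rather than an alternative to an internal argument. That said, every step of your proof holds up. For $(a)\Leftrightarrow(b)$, squaring $\vert 1-a-b\vert = 1-\vert a-b\vert$ and expanding does give $2\,a\circ b = a+b-\vert a-b\vert$; conversely $(b)$ yields $(1-a-b)^2=\bigl(1-\vert a-b\vert\bigr)^2$, and your observation that $0\leq a,b\leq 1$ forces $-1\leq a-b\leq 1$, hence $\vert a-b\vert\leq 1$, is exactly what makes $1-\vert a-b\vert$ a positive element; uniqueness of positive square roots then gives $(a)$. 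For $(b)\Leftrightarrow(d)$, setting $Q=a\circ(1-b)$ and $R=(1-a)\circ b$, the identity $Q-R=a-b$ is unconditional, and the uniqueness of the decomposition of a self-adjoint element into orthogonal positive parts (note that $QR=0$ with $Q,R\geq 0$ forces $RQ=(QR)^{\ast}=0$, hence $(Q+R)^2=(a-b)^2$ and $Q+R=\vert a-b\vert$) carries both directions. Finally, obtaining $(c)$ from $(d)$ via the substitution $b\mapsto 1-b$, together with the invariance of $(a)$ under that substitution, is clean and halves the work; it is in the same spirit as the symmetry the paper itself exploits between $a\circ b$ and $(1-a)\circ(1-b)$ in Cases 1 and 2 of the proof of Theorem \ref{characterization in vN}. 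Your closing remark is also well judged: attempting $(b)\Rightarrow(c)$ directly does run into the commutation of $\vert a-b\vert$ with $a\circ b$, and routing the backward implication through the square-root argument is precisely what avoids it.
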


Throughout the remaining sections we shall focus on the set $[0,1]_{A}$ of all positive elements in the closed unit ball of a C$^*$-algebra $A$.\smallskip

Absolute compatibility in the commutative setting can be characterized in the following form: two positive functions $a,b$ in the closed unit ball of $C(K)$ are absolutely compatible if and only if $a(t) b(t) =0$ for all $t$ in $K$ with $a(t),b(t)\in [0,1)$. Inspired by this characterization, our first result is a matricial decomposition of a pair of an arbitrary couple of absolutely compatible positive elements in the closed unit ball of a von Neumann algebra. 

\begin{theorem}\label{characterization in vN} Let $M$ be a von Neumann algebra, and let $a, b $ be elements in $[0, 1]_{M}$. Then the following statements are equivalent:\begin{enumerate}[$(a)$] \item $a$ is absolutely compatible with $b$;
\item There exists a projection $p_1$ in $M$ so that $a$ and $b$ have matrix representations,
	say $a = \left( \begin{array}{cc} a_{11} & a_{12} \\  a_{12}^{\ast} &
	a_{22} \end{array} \right),$ and $b = \left( \begin{array}{cc} b_{11} &
	b_{12} \\  b_{12}^{\ast} & b_{22} \end{array} \right)$ with respect to the set
	$\{ p_1, 1-p_1 = p_2\}$ {\rm(}i.e., $a_{ij} = p_i a p_j$ and $b_{ij} = p_i b p_j${\rm)} satisfying:
	\begin{enumerate}[$(b.1)$]
\item $p_1$ is the range projection of $a\circ b$;		
\item $a_{12} + b_{12} = 0$;
		\item $|a_{12}^{\ast}|^2
		= (p_1 - a_{11}) (p_1 - b_{11})$ {\rm(}and hence $a_{11} b_{11} = b_{11} a_{11}${\rm)};
		\item $|a_{12}|^2 = a_{22} b_{22}= b_{22} a_{22}$;
		\item $a_{12} = a_{11} a_{12} + a_{12} a_{22} = b_{11} a_{12} +
		a_{12} b_{22}$.
	\end{enumerate}
\end{enumerate}
\end{theorem}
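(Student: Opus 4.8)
The plan is to use Proposition \ref{p 00} as the only engine, translating absolute compatibility into the algebraic identities $(b)$, $(c)$, $(d)$ stated there and then reading the block structure off them. Write $c:=a\circ b$ and $d:=(1-a)\circ(1-b)=1-a-b+c$.

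For the implication $(a)\Rightarrow(b)$ I would set $p_1:=r(a\circ b)$, which is exactly $(b.1)$. Proposition \ref{p 00}$(c)$ gives $c,d\geq 0$ with $cd=0$; since two positive elements with zero product are orthogonal, $c\in p_1Mp_1$ and $r(d)\leq p_2$, so $d\in p_2Mp_2$. Expanding $d=1-a-b+c$ into blocks then forces the off-diagonal relation $a_{12}+b_{12}=0$, which is $(b.2)$, together with the diagonal relation $c_{11}=a_{11}+b_{11}-p_1$. Next I would invoke Proposition \ref{p 00}$(d)$: the elements $a-c=a\circ(1-b)$ and $b-c=(1-a)\circ b$ are positive with $(a-c)(b-c)=0$, hence also $(b-c)(a-c)=0$ by taking adjoints. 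Substituting $c_{11}=a_{11}+b_{11}-p_1$ and $b_{12}=-a_{12}$, these two elements take the explicit forms $\left(\begin{smallmatrix} p_1-b_{11} & a_{12} \\ a_{12}^* & a_{22}\end{smallmatrix}\right)$ and $\left(\begin{smallmatrix} p_1-a_{11} & -a_{12} \\ -a_{12}^* & b_{22}\end{smallmatrix}\right)$, and expanding the two vanishing products block by block yields $(b.3)$ from the $(1,1)$ entries of both orders (which also forces $a_{11}b_{11}=b_{11}a_{11}$), $(b.4)$ from the $(2,2)$ entries of both orders, and $(b.5)$ from the $(1,2)$ and $(2,1)$ entries.

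For the converse $(b)\Rightarrow(a)$ I would first compute $c=a\circ b$ in blocks using only $(b.2)$--$(b.5)$: these identities make the $(1,1)$, $(1,2)$ and $(2,2)$ entries collapse, giving $c=c_{11}\oplus 0$ with $c_{11}=a_{11}+b_{11}-p_1$. The main obstacle is positivity. A direct attempt to prove $a\circ b\geq 0$ and $(1-a)\circ(1-b)\geq 0$, as demanded by Proposition \ref{p 00}$(c)$, is delicate, because $a\circ b$ need not be positive for generic $0\leq a,b\leq 1$, and in a general von Neumann algebra one cannot lean on invertibility or Schur complements. The device that sidesteps this is to verify Proposition \ref{p 00}$(b)$ instead: I would observe that $a+b-2c=(2p_1-a_{11}-b_{11})\oplus(a_{22}+b_{22})$ is block-diagonal with manifestly positive entries, since $0\leq a_{11},b_{11}\leq p_1$, and then check that $(a+b-2c)^2=(a-b)^2$, where the off-diagonal blocks of $(a-b)^2$ vanish precisely by $(b.5)$ and the diagonal blocks match by $(b.3)$ and $(b.4)$. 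Uniqueness of the positive square root then gives $|a-b|=a+b-2a\circ b$, which is exactly Proposition \ref{p 00}$(b)$, so $a\triangle b$.

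I expect the forward direction to be routine bookkeeping once the orthogonality of $c$ and $d$ is exploited to place them in complementary corners. The real content, and the step I would spend the most care on, is the converse: recognising that one should not prove positivity of the Jordan product directly, but rather reduce to the square-root identity $(a+b-2c)^2=(a-b)^2$, whose positive member $a+b-2c$ is visibly positive from its block-diagonal form. Note that $(b.1)$ is used only to fix $p_1$ canonically in the forward direction, while the converse needs only $(b.2)$--$(b.5)$.
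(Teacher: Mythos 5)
Your proposal is correct and takes essentially the same route as the paper: the forward implication extracts $(b.2)$--$(b.5)$ from Proposition \ref{p 00}$(c)$/$(d)$ in the block decomposition determined by $p_1=r(a\circ b)$, and the converse verifies Proposition \ref{p 00}$(b)$ by showing $(a-b)^2$ is block diagonal so that $|a-b|=a+b-2\,a\circ b$. The only cosmetic differences are that you obtain $c_{11}=a_{11}+b_{11}-p_1$ directly from $p_1 d p_1=0$ (where the paper argues via $m_0(a_{11}+b_{11}-p_1-m_0)=0$ and the range projection), and your uniform block argument dispenses with the paper's separate treatment of the degenerate cases $p_1\in\{0,1\}$.
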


\begin{proof} Suppose first that $a$ is absolutely compatible with $b$. We deduce from Proposition \ref{p 00} \cite{Karn2018} that $a \circ b$ and $(1 - a) \circ (1 - b)$ are orthogonal elements in $[0, 1]_M,$ and thus
	\begin{eqnarray}\label{eq (1)}
	(a \circ b)^2 = (a \circ b) (a + b - 1) = (a + b - 1) (a \circ b),
	\end{eqnarray}
	which gives $r(a \circ b) (a + b - 1) = (a + b - 1) r(a \circ b)$, or equivalently,
	\begin{eqnarray}\label{eq (2)}
	p_1 (a + b) = (a + b) p_1,
	\end{eqnarray} where $p_1 =r(a \circ b)$ is the range projection of $a\circ b$ in $M$.\smallskip
	
We shall distinguish three cases.\smallskip
	
Case $1$. If $p_1 = 0$, we have $a \circ b = 0,$ and thus $a \perp b$, because in this case $0\leq (1 - a) \circ (1 - b) = 1-a-b + \circ b = 1-a-b $, and hence $0\leq a+b\leq 1,$ which combined with $a\triangle b$ gives $a\perp b$ (cf. or \cite[Proposition 4.1]{Karn2018}).\smallskip
	
Case $2$. If $p_1 = 1$, we have $(1 - a) \circ (1 - b) = 0,$ and by similar arguments to those given above we get $(1 - a) \perp (1 - b)$.\smallskip
	
Case $3$. Let us assume that $p_1 \not= 1, 0$. Let $a = \left(
	\begin{array}{cc} a_{11} & a_{12} \\  a_{12}^{\ast} & a_{22} \end{array}
	\right)$ and $b = \left( \begin{array}{cc} b_{11} & b_{12} \\
	b_{12}^{\ast} & b_{22} \end{array} \right)$ be the matrix representations with respect to $\{ p_1, 1-p_1 \}$.
By applying the matrix representation in \eqref{eq (2)} we get $a_{12} + b_{12} = 0.$\smallskip

Furthermore, if we write $a \circ b = \left( \begin{array}{cc} m_0 & 0 \\  0 & 0\end{array} \right)$ in our matrix representation, we deduce from \eqref{eq (1)} that $m_0^2 = m_0 ( a_{11} + b_{11} - p_1)$, or equivalently,
	$m_0 ( a_{11} + b_{11} - p_1 - m_0) = 0$. It follows that
	$p_1 ( a_{11} + b_{11} - p_1 - m_0) =r(m_0) ( a_{11} + b_{11} - p_1 - m_0) = 0$, that is,
	\begin{eqnarray}\label{eq 10}
	m_0 = a_{11} + b_{11} - p_1.
	\end{eqnarray}
	
	Since $a$ is absolutely compatible with $b$, by Proposition \ref{p 00}, we also have $a \circ (1 - b), (1 - a) \circ b \in [0, 1]_M$ with $( a \circ
	(1 - b) ) ( (1 - a) \circ b ) = 0$. That is, $(a - a\circ b)
	(b - a\circ b) = 0$. Again, by computing the multiplication with respect to their matricial representations and applying \eqref{eq 10}, we get
	\begin{eqnarray*}
	(p_1 - b_{11}) (p_1 - a_{11}) - a_{12} a_{12}^{\ast} &=& 0; \\
	- (p_1 - b_{11}) a_{12} + a_{12} b_{22} &=& 0; \\
	a_{12}^{\ast} (p_1 - a_{11}) - a_{22} a_{12}^{\ast} &=& 0; \\
	- a_{12}^{\ast} a_{12} + a_{22} b_{22} &=& 0.
	\end{eqnarray*} These identities prove the desired statements.\smallskip
	
	We assume next the existence of a projection $p_1\in M$ such that $a$ and $b$ enjoy a matrix representation with respect to $\{p_1,1-p_1\}$ satisfying $(b.1)$-to-$(b.5)$. In this case, by $(b.2)$, the matrix representations of $a + b$ and $a - b$ with respect to $\{ p_1, 1- p_1 \}$ are
	$$a + b = \left( \begin{array}{cc} a_{11} + b_{11} &
	0 \\  0 & a_{22} + b_{22} \end{array} \right)\hbox{ and } a - b = \left(
	\begin{array}{cc} a_{11} - b_{11} & 2 a_{12} \\  2 a_{12}^{\ast} & a_{22}
	- b_{22} \end{array} \right),$$ respectively.\smallskip
	
	Thus, the element $(a - b)^2$ writes in the form  $$\! \left(\!\! \begin{array}{cc} (a_{11} - b_{11})^2
	+ 4 a_{12} a_{12}^{\ast} & 2 ( (a_{11} - b_{11}) a_{12} + a_{12}
	(a_{22} - b_{22}) ) \\  2 ( (a_{11} - b_{11}) a_{12} + a_{12}
	(a_{22} - b_{22}) )^{\ast} & 4 a_{12}^{\ast} a_{12} + (a_{22} -
	b_{22})^2 \end{array}\! \!\right).$$
Now, applying $(b.5)$, $(b.3)$ and $(b.4)$ we deduce that $$2 ( (a_{11} - b_{11}) a_{12} + a_{12}
	(a_{22} - b_{22}) )=0, $$ $$(a_{11} - b_{11})^2
	+ 4 a_{12} a_{12}^{\ast} = (a_{11} - b_{11})^2
	+ 4 (p_1-a_{11}) (p_1-b_{11}) $$ $$= \left( (p_1 - a_{11}) + (p_1 - b_{11}) \right)^2,$$ and $$4 a_{12}^{\ast} a_{12} + (a_{22} -
	b_{22})^2 = 4 a_{22} b_{22} + (a_{22} -
	b_{22})^2 = (a_{22} + b_{22})^2.$$

We therefore have $$(a-b)^2 = \! \left(\!\! \begin{array}{cc} \left( (p_1 - a_{11}) + (p_1 - b_{11}) \right)^2 & 0 \\  0 & (a_{22} + b_{22})^2 \end{array}\! \!\right),$$ and
	$$\vert a - b \vert = \left( \begin{array}{cc} (p_1 - a_{11}) + (p_1 - b_{11}) & 0 \\  0 & a_{22} + b_{22} \end{array} \right).$$
	Next, again by $(b.3)$ and $(b.4)$ we also have
	\begin{eqnarray*}
		a b &=& \left( \begin{array}{cc} a_{11} b_{11}
			- a_{12} a_{12}^{\ast} &  - a_{11} a_{12} + a_{12} b_{22} \\
			a_{12}^{\ast} b_{11} - a_{22} a_{12}^{\ast} & - a_{12}^{\ast}
			a_{12} + a_{22} b_{22} \end{array} \right) \\
		&=& \left( \begin{array}{cc} a_{11} + b_{11} - p_1 & - a_{11}
			a_{12} + a_{12} b_{22} \\  a_{12}^{\ast} b_{11} - a_{22}
			a_{12}^{\ast}  & 0 \end{array} \right),
	\end{eqnarray*} and, by $(b.5)$, $ a \circ b = \left( \begin{array}{cc} a_{11} + b_{11} - p_1 & 0
	\\ 0 & 0 \end{array} \right)$, which shows that $2 (a\circ b) = a+b-|a-b|,$ and Proposition \ref{p 00} implies that $a$ is
	absolutely compatible with $b$.
\end{proof}

Clearly, any two orthogonal elements commute. We have already commented that the same conclusion is not true for absolutely compatible positive elements in general. 
In the subsequent discussion, we shall find some finer
characterizations for positive absolutely compatible pairs in a von Neumann algebra, and we shall isolate the
reason for which a pair of positive absolutely compatible elements in the closed unit ball may not commute.\smallskip

Let $M$ be a von Neumann algebra, and let $\mathcal{P}(M)$ denote the lattice of all projections in $M$. Given $0\leq a \leq 1$ in $M$, we shall denote by $s(a)$ the support projection of $a$, that is $s(a) = 1 - r(1 - a),$ where, as before, $r(a)$ is the range projection of $a$. It is known that $$s(a) = \sup \{ p \in \mathcal{P}(M): p \le a \} = \sup \{ p \in \mathcal{P}(M): p a = p = a p \},$$ and hence $s(a) a = s(a) = a s(a)$. Next, we set $e(a) := a - s(a)$, and we call it the \emph{almost strict part} of $a$.
Then $s(a) + e(a) = a \le r(a)$, and we have $s(a) \le r(a)$ and $e(a) \le
r(a) - s(a)$. Thus $r(e(a)) \le r(a) - s(a)$ so that $a = s(a) + e(a) \le s(a)
+ r(e(a)) \in \mathcal{P}(M)$. It trivially follows that $r(a) = s(a) + r(e(a))$. Finally, we
define $n(a) := 1 - r(a)$. Then $\{ s(a), r(e(a)), n(a) \}$ is a set of
mutually orthogonal projections in $M$ such that $s(a) + r(e(a)) + n(a)
= 1$. Therefore, every $x \in M$ has a unique $3 \times 3$ matrix
representation with respect to this system which we shall call the
matrix representation of $x$ with respect to $a$. In particular, the
matrix representation of $a$ with respect to its own system of projections is $\left(
\begin{array}{ccc} s(a)&0&0\\ 0&e(a)&0\\ 0&0&0 \end{array} \right)$.

\begin{definition}\label{def strict}
An element $0\leq a \leq 1$ in a von Neumann algebra $M$ will be called \emph{strict} if $s(a) = 0=n(a),$ equivalently, $a= e(a)$ and $r(a) = r(e(a))=1$.
\end{definition}

Every positive invertible element in the closed unit ball with zero support projection is clearly strict. The almost strict part of a projection is always zero. Every element $a \in [0, 1]$ with non-zero almost strict part can be written as a sum of a (possibly zero) projection and an element in $[0, 1]$ which is strict in some hereditary von Neumann subalgebra; namely $a = s(a) + e(a)$. We gather next some basic properties of strict elements.  

\begin{proposition}\label{strict} Let $0\leq a\leq 1$ be a strict element in a von Neumann algebra $M$.
	\begin{enumerate}[$(i)$]
		\item Then $r(1 - a) = 1$;
		\item If $0\leq  b\leq 1$ is absolutely compatible with $a$ such that $ab=ba$, then $b$ is a projection;
		\item If $0\leq b \leq 1$ is strict such that $a$ is absolutely compatible with $b$, then $a b \not= b a$;
		\item If $x \in M$ such that $x^{\ast} a^2 x = x^{\ast} x$, then $x = 0$.
	\end{enumerate}
\end{proposition}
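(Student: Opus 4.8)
The plan is to treat the four assertions in the order given, deriving the later ones from the earlier. Assertion $(i)$ is immediate: strictness of $a$ means $s(a)=0$ by Definition \ref{def strict}, and since $s(a)=1-r(1-a)$ by the very definition of the support projection, we conclude $r(1-a)=1$.

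The substantive statement is $(ii)$, and this is where I expect the main work to lie. Assuming $a\triangle b$ with $ab=ba$, I would invoke Proposition \ref{p 00}$(c)$ to obtain that $a\circ b$ and $(1-a)\circ(1-b)$ are positive and mutually orthogonal. Because $a$ and $b$ commute, these Jordan products collapse to the ordinary products $ab$ and $(1-a)(1-b)$, and their orthogonality reads $ab(1-a)(1-b)=0$; rearranging by commutativity this is $\big(a(1-a)\big)\big(b(1-b)\big)=0$. Now strictness of $a$ enters: from $(i)$ we have $r(1-a)=1$, while $n(a)=0$ gives $r(a)=1$, so for the commuting positive elements $a$ and $1-a$ one gets $r\big(a(1-a)\big)=r(a)\wedge r(1-a)=1$. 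Since $a(1-a)$ and $b(1-b)$ are commuting positive elements whose product vanishes, their range projections are orthogonal, and as $r\big(a(1-a)\big)=1$ this forces $r\big(b(1-b)\big)=0$, i.e. $b(1-b)=0$. Hence $b=b^2$ is a projection. The delicate point is the bookkeeping of range projections of commuting positive elements (that $r(xy)=r(x)\wedge r(y)$, and that $xy=0$ forces $r(x)\perp r(y)$ for commuting $x,y\geq 0$); these are standard facts obtained by passing to the abelian von Neumann algebra generated by $a$ and $b$, where everything reduces to supports of functions. This step also pinpoints \emph{why} strictness is the right hypothesis: it is exactly the condition making $a(1-a)$ have full range projection.

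Assertion $(iii)$ then follows by contraposition. If a strict $b$ satisfied $ab=ba$, part $(ii)$ would force $b$ to be a projection; but a projection has almost strict part zero, whereas strictness of $b$ demands $b=e(b)$, so $b=0$, contradicting $r(b)=1$ (equivalently $n(b)=0$). Hence $ab\neq ba$. I regard $(iii)$ as a formal consequence once $(ii)$ is in place.

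Finally, for $(iv)$ I would argue directly. The hypothesis $x^{\ast}a^2 x=x^{\ast}x$ rewrites as $x^{\ast}(1-a^2)x=0$, and since $0\leq a\leq 1$ gives $0\leq a^2\leq 1$, the element $1-a^2$ is positive; therefore $(1-a^2)^{1/2}x=0$. It then suffices to verify that $(1-a^2)^{1/2}$ has trivial kernel, i.e. $r(1-a^2)=1$. As $a(1-a)\geq 0$ we have $1-a^2=(1-a)+a(1-a)\geq 1-a\geq 0$, so by monotonicity of range projections together with $(i)$ we get $r(1-a^2)\geq r(1-a)=1$. Consequently $x=0$. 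As with the earlier parts, strictness feeds into $(iv)$ precisely through the condition $r(1-a)=1$ recorded in $(i)$.
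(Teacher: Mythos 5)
Your proof is correct in all four parts. For $(i)$--$(iii)$ you essentially retrace the paper's argument: $(i)$ is immediate from $s(a)=1-r(1-a)$; in $(ii)$ both you and the paper reduce matters to the identity $a(1-a)\,b(1-b)=0$ and then exploit $r(a)=r(1-a)=1$, the only difference being that you obtain this identity from part $(c)$ of Proposition~\ref{p 00} (orthogonality of $a\circ b$ and $(1-a)\circ(1-b)$, which collapse to ordinary products by commutativity), whereas the paper starts from part $(b)$, $\vert a-b\vert=a+b-2ab$, and squares; moreover, your bookkeeping via $r\big(a(1-a)\big)=r(a)\wedge r(1-a)=1$ and orthogonality of range projections is sound but can be shortened to two applications of the fact that $xy=0$ with $x\geq 0$ and $r(x)=1$ forces $y=0$ (first with $x=a$, then with $x=1-a$), which is what the paper implicitly does; and $(iii)$ is the same contrapositive in both. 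Part $(iv)$ is where you take a genuinely different and more elementary route. The paper multiplies the hypothesis on both sides to get $xx^{\ast}a^2xx^{\ast}=(xx^{\ast})^2$, iterates to $(xx^{\ast})^n a^2 (xx^{\ast})^m=(xx^{\ast})^{n+m}$, passes to the von Neumann algebra generated by $xx^{\ast}$ to obtain $p a^2 p=p$ for $p=r(xx^{\ast})$, and then invokes the characterization $s(a)=\sup\{q\in\mathcal{P}(M): qa=q=aq\}$ to conclude $p\leq s(a)=0$. You instead rewrite the hypothesis as $x^{\ast}(1-a^2)x=0$, deduce $(1-a^2)^{1/2}x=0$, and observe that $1-a^2=(1-a)+a(1-a)\geq 1-a$, so that monotonicity of range projections together with $(i)$ gives $r(1-a^2)\geq r(1-a)=1$, whence $x=0$. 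This two-line argument avoids both the iteration and the support-projection characterization, and it makes explicit that only $s(a)=0$ (not $n(a)=0$) is used in $(iv)$ --- a feature the paper's proof shares, but less transparently.
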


\begin{proof}$(i)$ Let $p = r(1 - a)$. Since $(1-p)  a= 1-p,$ and $a$ is strict, we get $0\leq 1-p\leq s(a)=0$, and thus $p = 1$.\smallskip
	
$(ii)$ Since $a b = b a,$ and $a$ is absolutely compatible with $b$, Proposition \ref{p 00} implies that $\vert a - b \vert = a + b - 2 a b$. Squaring, we get	$$a^2 + b^2 - 2 a b = a^2 + b^2 + 2 a b + 4 a^2 b^2
	-4 a b (a + b).$$
	$$0= a b + a^2 b^2- a b (a + b).$$
	
Thus $a (1 - a) (b - b^2)= 0$. Since $a$ is strict, we have $r(1 - a) = 1=r(a),$ and hence $b = b^2$.\smallskip
	
$(iii)$ follows from $(ii)$. \smallskip
	
	$(iv)$ Let us take $x \in M$ with $x^{\ast} a^2 x = x^{\ast} x$. Since $xx^{\ast} a^2 xx^* = xx^{\ast} xx^*,$ we have $(xx^{\ast})^n a^2 (xx^*)^m = (xx^*)^{n+m},$ for all $n,m\in \mathbb{N}$. Consequently, $z a^2 w = z w$ for all $z,w$ in the von Neumann subalgebra generated by $xx^*$. Since the range projection $p=r(xx^*)$ lies in the latter von Neumann subalgebra, we deduce that $p a^2 p= p$. Thus $p(1-a^2)p = 0$ and consequently $(1-a^2)^{\frac{1}{2}} p =0 =(1-a^2)p =0$. Therefore, $a^2 p = p a^2 = p$. In particular, $p=pa = ap\leq s(a)=0$, because $a$ is strict. This proves that $x x^*=0$ or equivalently $x=0$.
\end{proof}

Now, we prove a characterization for commuting pairs of positive elements in a von Neumann algebra.

\begin{theorem}\label{comm}
	Let $M$ be a von Neumann algebra and let $0\leq a \leq 1$ in $M$. Then
	$0\leq b \leq 1$ commutes with $a$ if, and only if, there exist
	$0\leq b_1 \leq s(a)$, $0\leq b_2 \leq r(e(a))$ and $0\leq b_3 \leq n(a)$ such that
	$b_2$ commutes with $e(a),$ and $b = b_1 + b_2 + b_3$.
\end{theorem}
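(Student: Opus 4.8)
The plan is to characterize commuting pairs via the decomposition $\{s(a),r(e(a)),n(a)\}$, so I would first establish the easy direction. Suppose $b = b_1+b_2+b_3$ with $0\leq b_1\leq s(a)$, $0\leq b_2\leq r(e(a))$, $0\leq b_3\leq n(a)$, and $b_2$ commuting with $e(a)$. Since the three projections are mutually orthogonal and each $b_i$ lives in the corresponding corner, the $b_i$ are mutually orthogonal. Now I would compute $ab$ and $ba$ using $a = s(a) + e(a)$. The key observations are that $s(a)$ acts as a unit on $b_1$ (so $s(a) b_1 = b_1 = b_1 s(a)$ from $b_1\leq s(a)$), that $e(a)$ is supported on $r(e(a))$ (so $e(a)$ commutes with $b_1$ and $b_3$ trivially by orthogonality of supports), and that $s(a)$ annihilates $b_2$ and $b_3$. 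Putting these together, $ab = s(a)b_1 + e(a)b_2 = b_1 + e(a)b_2$ and similarly $ba = b_1 + b_2 e(a)$, so the hypothesis $e(a)b_2 = b_2 e(a)$ yields $ab = ba$.

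For the converse, I would start from the assumption that $b$ commutes with $a$ and recover the three pieces. Since $ab = ba$, the whole von Neumann algebra generated by $a$ commutes with $b$; in particular the spectral projections of $a$ commute with $b$. The projections $s(a)$, $r(e(a))$, and $n(a)$ are all built from the spectral resolution of $a$ (they are, respectively, the spectral projection for $\{1\}$, for $(0,1)$, and for $\{0\}$), so each of them commutes with $b$. Consequently $b$ is block-diagonal in the $3\times 3$ matrix representation with respect to $\{s(a),r(e(a)),n(a)\}$, and I would set $b_1 = s(a)\,b\,s(a)$, $b_2 = r(e(a))\,b\,r(e(a))$, and $b_3 = n(a)\,b\,n(a)$. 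Because $b$ is block-diagonal these satisfy $b = b_1+b_2+b_3$, and each $b_i$ is a positive element bounded by the corresponding corner unit, giving $0\leq b_1\leq s(a)$, etc. Finally, since $e(a) = r(e(a))\,e(a)\,r(e(a))$ lives in the middle corner and commutes with $b$, it commutes with $b_2 = r(e(a))\,b\,r(e(a))$.

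The only delicate point, and the main thing I would want to pin down carefully, is that $s(a)$, $r(e(a))$, and $n(a)$ really are spectral projections of $a$ and therefore commute with anything commuting with $a$. For $n(a) = 1 - r(a)$ this is standard, since the range projection is the strong limit of $a^{1/k}$ and hence lies in the von Neumann algebra generated by $a$. For $s(a)$ one uses $s(a) = 1 - r(1-a)$, again a range projection applied to $1-a$, so it too lies in $W^*(a)$. Then $r(e(a)) = r(a) - s(a)$ is a difference of elements of $W^*(a)$. Thus all three belong to the abelian von Neumann algebra $W^*(a)$, which commutes with $b$ whenever $a$ does. Once this is secured the block-diagonalisation is immediate and the verification that each $b_i$ lands in $[0, \text{corner}]$ is routine positivity in a corner. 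I expect essentially no serious obstacle here; the proof is a clean application of the spectral/commutant structure, and the real content has already been front-loaded into the definitions of $s(a)$, $e(a)$, and $n(a)$ given just above the statement.
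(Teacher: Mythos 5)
Your proposal is correct, and for the nontrivial direction it takes a genuinely different route from the paper. The easy direction (reconstructing $ab=ba$ from the decomposition) is handled the same way in both cases --- the paper simply declares it routine. For the converse, you invoke the spectral/commutant machinery: since $b$ is self-adjoint, $\{b\}'$ is a von Neumann algebra containing $a$ and $1$, hence it contains $W^*(a,1)$, and the three projections are precisely the spectral projections $s(a)=\chi_{\{1\}}(a)$, $r(e(a))=\chi_{(0,1)}(a)$, $n(a)=\chi_{\{0\}}(a)$ of $a$ (your identifications are correct, given $0\leq a\leq 1$), so block-diagonality of $b$ and the commutation $e(a)b_2=b_2e(a)$ follow at once. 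The paper instead writes $b$ as a $3\times 3$ matrix over $\{s(a),r(e(a)),n(a)\}$, extracts from $ab=ba$ the relations $b_{12}=b_{12}e(a)$, $e(a)b_{23}=0$, $b_{13}=0$, and then kills $b_{12}$ and $b_{23}$ by applying Proposition \ref{strict}$(iv)$ (if $x^*e(a)^2x=x^*x$ then $x=0$) to the strict element $e(a)$ of the corner algebra $r(e(a))Mr(e(a))$. Your argument is shorter and more standard for this particular theorem, but the paper's computational template is chosen deliberately: the same matrix-plus-Proposition-\ref{strict}$(iv)$ scheme is reused essentially verbatim in the proof of Theorem \ref{comp}, where the hypothesis is only absolute compatibility and one cannot argue that $b$ commutes with the spectral projections of $a$; there the off-diagonal entries are killed by comparing $|a-b|^2$ with $(a-b)^2$ rather than by a commutant argument. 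So your shortcut is valid here but does not extend to the theorem this one is paired with, which explains why the authors proved it the longer way.
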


\begin{proof} We assume first that $a b = b a$. Consider the matrix representations of $a$ and $b$ with respect to $\{s(a),r(e(a)),n(a)\}$:
	$$a = \left( \begin{array}{ccc} s(a)&0&0\\ 0&e(a)&0\\ 0&0&0 \end{array} \right); \qquad
	b = \left( \begin{array}{ccc} b_{11}&b_{12}&b_{13}\\ b_{12}^{\ast}&b_{22}&b_{23}\\
	b_{13}^{\ast}&b_{23}^{\ast}&b_{33} \end{array} \right).$$
Since $a b = b a$, computing the matrix multiplications, we may conclude that
\begin{equation}\label{comm1} b_{12} = b_{12} e(a),\  e(a) b_{23} = 0,\ b_{13} = 0, \hbox{ and } e(a) b_{22} = b_{22} e(a).
\end{equation}


By the first equality in \eqref{comm1} we get $b_{12} e(a)^2 b_{12}^{\ast} = b_{12} b_{12}^{\ast}$. Since $e(a)$ is a strict element in the von Neumann subalgebra $r(e(a)) M r(e(a))$, and $b_{12} b_{12}^{\ast}$ lies in the latter subalgebra, we conclude from Proposition \ref{strict}$(iv)$ that $b_{12}^{\ast} = 0 =b_{12} $. Next, the second equality in \eqref{comm1} implies that $(r(e(a)) - e(a)) b_{23} = b_{23}$. Similar arguments to those applied before give $b_{23} = 0$. Thus $b = b_{11} + b_{22} + b_{33}$ with $b_{11}
	\in [0, s(a)]$, $b_{22} \in [0, r(e(a))],$ $b_{33} = [0, n(a)],$ and  $e(a) b_{22} = b_{22} e(a)$.\smallskip
	
	The reciprocal implication can be easily checked in a routine way.
\end{proof}

\begin{lemma}\label{l absolute compatibility in hereditary subalgebras} Let $0\leq a,b\leq 1$ in a von Neumann algebra $M$. Suppose $p$ is a projection in $M$. Then the following statements hold:
\begin{enumerate}[$(a)$]\item If $a,b\in p M p$, then $a\triangle b$ in $M$ if, and only if, $a\triangle b$ in $p Mp;$
\item If $a,b\in  p M p \oplus (1-p) M (1-p)$, then $a\triangle b$ in $M$ if, and only if, $p a p\triangle p b p$ in $p Mp$ and $(1-p) a (1-p) \triangle (1-p) b (1-p)$ in $(1-p) M(1-p)$.
\end{enumerate}
\end{lemma}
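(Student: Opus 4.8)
The plan is to prove Lemma \ref{l absolute compatibility in hereditary subalgebras} by reducing absolute compatibility in the large algebra to the hereditary (or direct-summand) subalgebras via the algebraic characterization in Proposition \ref{p 00}, rather than working directly with the defining identity $|a-b|+|1-a-b|=1$. The key observation is that condition $(c)$ of Proposition \ref{p 00}, namely that $a\circ b$ and $(1-a)\circ(1-b)$ are positive and orthogonal, is purely algebraic and involves only products and sums of $a$ and $b$; the only subtle ingredient is that the unit in the defining expression $(1-a)$ changes when we pass from $M$ to a corner $pMp$, where the local unit is $p$ rather than $1$.

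For part $(a)$, assume $a,b\in pMp$. I would first record that for elements of $pMp$ one has $a\circ b\in pMp$ and the relevant ``$1$'' in Proposition \ref{p 00}$(c)$ is $p$. The crucial point is to compare $(1-a)\circ(1-b)$ computed in $M$ with $(p-a)\circ(p-b)$ computed in $pMp$. A direct expansion gives $(1-a)\circ(1-b) = 1 - a - b + a\circ b$, while $(p-a)\circ(p-b) = p - a - b + a\circ b$ (using $pa=ap=a$ and $pb=bp=b$, since $a,b\in pMp$). Hence $(1-a)\circ(1-b) = (1-p) + (p-a)\circ(p-b)$, and the two summands are orthogonal because $(1-p)$ is orthogonal to everything in $pMp$. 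From this decomposition, positivity of $(1-a)\circ(1-b)$ in $M$ is equivalent to positivity of $(p-a)\circ(p-b)$ in $pMp$. Moreover, since $a\circ b\in pMp$ annihilates $(1-p)$, the product $(a\circ b)\big((1-a)\circ(1-b)\big)$ equals $(a\circ b)\big((p-a)\circ(p-b)\big)$. Therefore the conditions of Proposition \ref{p 00}$(c)$ hold in $M$ if and only if they hold (with $p$ as unit) in $pMp$, which is exactly $a\triangle b$ in $pMp$.

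For part $(b)$, write $a=a_1\oplus a_2$ and $b=b_1\oplus b_2$ along the decomposition $pMp\oplus(1-p)M(1-p)$, where $a_1=pap$, $a_2=(1-p)a(1-p)$, etc. Since the two summands multiply independently, $a\circ b = (a_1\circ b_1)\oplus(a_2\circ b_2)$ and $(1-a)\circ(1-b) = (p-a_1)\circ(p-b_1)\oplus\big((1-p)-a_2\big)\circ\big((1-p)-b_2\big)$. An element of a direct sum is positive if and only if each component is positive, and the product of two block-diagonal elements is the direct sum of the componentwise products, which vanishes if and only if each component vanishes. Applying Proposition \ref{p 00}$(c)$ in each corner then yields that $a\triangle b$ in $M$ is equivalent to the conjunction $a_1\triangle b_1$ in $pMp$ and $a_2\triangle b_2$ in $(1-p)M(1-p)$. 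I would note that this computation can be carried out purely within the von Neumann algebra using the characterization of part $(a)$ applied to each summand, so that $(b)$ follows once $(a)$ is established.

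The main obstacle, and the step deserving the most care, is the bookkeeping of the unit in part $(a)$: the absolute compatibility relation is defined relative to the ambient unit, and one must verify that the clean decomposition $(1-a)\circ(1-b) = (1-p) + (p-a)\circ(p-b)$ with orthogonal summands is precisely what allows positivity and orthogonality to descend to the corner. Everything else is a routine verification that block-diagonal structure is preserved by the Jordan product and by the order, so I would keep those computations brief and lean on Proposition \ref{p 00} to avoid manipulating absolute values directly.
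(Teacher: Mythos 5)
Your proof is correct, but it takes a genuinely different route from the paper's. The paper argues directly from the defining identity: writing $1-a-b=(1-p)+(p-a-b)$ as a sum of orthogonal self-adjoint elements, it uses the additivity $|x+y|=|x|+|y|$ for orthogonal $x,y$ to get $|1-a-b|=(1-p)+|p-a-b|$, whence $|a-b|+|1-a-b|=1$ holds in $M$ exactly when $|a-b|+|p-a-b|=p$ holds in $pMp$; part $(b)$ is then dispatched with ``similar arguments''. You instead route everything through Proposition \ref{p 00}$(c)$, replacing absolute values by the Jordan-product conditions and tracking how positivity and the orthogonality of $a\circ b$ and $(1-a)\circ(1-b)$ behave under the decomposition $(1-a)\circ(1-b)=(1-p)+(p-a)\circ(p-b)$. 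Both arguments pivot on the same structural fact --- splitting off the summand $1-p$, which is orthogonal to everything in $pMp$ --- but the key lemma differs. The paper's version is shorter and stays at the level of the definition; yours avoids manipulating absolute values altogether, makes the dependence of the relation $\triangle$ on the ambient unit completely explicit, and supplies in full the block-diagonal computation for part $(b)$ that the paper leaves to the reader. One small caveat: your closing remark that $(b)$ ``follows once $(a)$ is established'' undersells what is needed, since $(a)$ alone does not eliminate the direct-sum computation; but as you carry out that computation explicitly, the proof stands.
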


\begin{proof}$(a)$ Suppose $a,b\in p M p$. By orthogonality $ |a-b| + |1-a-b| = |a-b| + |(1-p) +p -a-b| = |a-b| + (1-p) + | p -a-b|$ with $|a-b|, | p -a-b|\in p M p$. Therefore, $1= |a-b| + |1-a-b|$ if, and only if, $1-p =|a-b| + | p -a-b|$.\smallskip

The proof of $(b)$ follows by similar arguments.
\end{proof}

If instead of considering commuting pairs of positive elements we study positive absolutely compatible pairs, we get the following.

\begin{theorem}\label{comp} Let $M$ be a von Neumann algebra and let $0\leq a \leq 1$ be an element in $M$. Then
	$0\leq b \leq 1$ in $M$ is absolutely compatible with $a$ if, and only if, there exist $0\leq b_1 \leq s(a)$, $0\leq b_2 \leq r(e(a)),$ and $0\leq b_3 \leq n(a)$ such that $b_2$ is absolutely compatible with $e(a)$ and $b = b_1 + b_2 + b_3$.
\end{theorem}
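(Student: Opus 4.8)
The plan is to run the same $3\times 3$ matricial argument used for Theorem \ref{comm}, but with the commutation hypothesis replaced by the positivity conditions supplied by Proposition \ref{p 00}. First I would write the matrix representations of $a$ and $b$ with respect to the orthogonal system $\{s(a), r(e(a)), n(a)\}$, so that $a = \mathrm{diag}(s(a), e(a), 0)$ and $b = (b_{ij})_{i,j=1}^3$ with $b_{ij} = p_i b p_j$. The converse implication is the routine one: if $b = b_1 + b_2 + b_3$ is block diagonal with $b_2 \triangle e(a)$, then both $a$ and $b$ are supported on $s(a)Ms(a) \oplus r(e(a))Mr(e(a)) \oplus n(a)Mn(a)$, and two successive applications of Lemma \ref{l absolute compatibility in hereditary subalgebras}$(b)$ (first with the projection $s(a)$, then with $r(e(a))$ inside $(1-s(a))M(1-s(a))$) reduce $a\triangle b$ to the three corners. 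In $s(a)Ms(a)$ the element $s(a)$ is the unit and is absolutely compatible with every $0\le b_1\le s(a)$, since $|s(a)-b_1| + |{-b_1}| = (s(a)-b_1) + b_1 = s(a)$; in $n(a)Mn(a)$ the zero element is absolutely compatible with every $0 \le b_3 \le n(a)$ by the symmetric computation; and $e(a)\triangle b_2$ holds by hypothesis. Hence $a\triangle b$.

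For the forward implication I would invoke Proposition \ref{p 00}$(c)$: $a\triangle b$ forces $a\circ b \ge 0$ and $(1-a)\circ(1-b)\ge 0$. Computing $ab$ and $ba$ blockwise and symmetrizing, the $(3,3)$ block of $a\circ b$ is $0$ while its $(1,3)$ and $(2,3)$ entries are $\tfrac12 b_{13}$ and $\tfrac12 e(a)b_{23}$; likewise the $(1,1)$ block of $(1-a)\circ(1-b)$ is $0$ while its $(1,2)$ and $(1,3)$ entries are $-\tfrac12 b_{12}(r(e(a))-e(a))$ and $-\tfrac12 b_{13}$. The standard fact that a positive $2\times2$ operator matrix with a vanishing diagonal corner must have a vanishing off-diagonal corner then yields $b_{13}=0$, $e(a)b_{23}=0$ and $b_{12}(r(e(a))-e(a))=0$.

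The hard part will be to upgrade the last two annihilation identities to $b_{23}=0$ and $b_{12}=0$; this is exactly the place where strictness of $e(a)$ enters. Working inside the hereditary subalgebra $r(e(a))Mr(e(a))$, in which $e(a)$ is strict and $r(e(a))$ is the unit, Proposition \ref{strict}$(i)$ gives that both $e(a)$ and $r(e(a))-e(a)$ have range projection equal to $r(e(a))$. Combined with the elementary observation that for a positive $c$ one has $cx=0 \Rightarrow x^*cx=0 \Rightarrow c^{1/2}x=0 \Rightarrow r(c)x=0$, the identity $e(a)b_{23}=0$ forces $r(e(a))b_{23}=b_{23}=0$, and $b_{12}(r(e(a))-e(a))=0$ forces, after taking adjoints, $b_{12}^*=0$. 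Thus $b=b_1+b_2+b_3$ is block diagonal with $0\le b_1\le s(a)$, $0\le b_2\le r(e(a))$, $0\le b_3\le n(a)$. Finally, since $a$ and $b$ are now both supported on the three corners, the same two-fold reduction via Lemma \ref{l absolute compatibility in hereditary subalgebras}$(b)$ applies, and $a\triangle b$ delivers the three corner compatibilities; the two outer ones being automatic, this leaves precisely $e(a)\triangle b_2$, completing the proof.
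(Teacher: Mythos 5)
Your proposal is correct, and while it shares the paper's skeleton---the $3\times 3$ representation of $a$ and $b$ with respect to $\{s(a),r(e(a)),n(a)\}$, and the use of $a\circ b\geq 0$ together with its vanishing $(3,3)$ corner to force $b_{13}=0$ and $e(a)b_{23}=0$---it handles the two decisive steps by genuinely different means. First, to kill $b_{12}$ the paper expands $\vert a-b\vert^2=(a-b)^2$ entrywise, extracts $b_{12}e(a)^2b_{12}^{\ast}=b_{12}b_{12}^{\ast}$ from the $(1,1)$ entries, and invokes the cancellation property of strict elements, Proposition \ref{strict}$(iv)$ (exactly as in the proof of Theorem \ref{comm}); you instead use the other half of Proposition \ref{p 00}$(c)$, namely $(1-a)\circ(1-b)\geq 0$, whose vanishing $(1,1)$ corner yields $b_{12}(r(e(a))-e(a))=0$, and then Proposition \ref{strict}$(i)$ (the element $r(e(a))-e(a)$ has full range in $r(e(a))Mr(e(a))$) finishes the job. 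This treats $a\circ b$ and $(1-a)\circ(1-b)$ symmetrically, avoids computing $\vert a-b\vert^2$ and $(a-b)^2$ altogether, and needs only the easy part $(i)$ of Proposition \ref{strict} instead of the harder part $(iv)$. Second, once block-diagonality of $b$ is established, you obtain the equivalence $a\triangle b\Leftrightarrow e(a)\triangle b_{22}$ by applying the if-and-only-if of Lemma \ref{l absolute compatibility in hereditary subalgebras}$(b)$ twice (with $p=s(a)$, then with $r(e(a))$ inside $(1-s(a))M(1-s(a))$), noting that the unit, respectively the zero element, of a corner is absolutely compatible with everything there; this single mechanism delivers both the forward extraction of $e(a)\triangle b_{22}$ and the entire converse. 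The paper instead derives $\vert e(a)-b_{22}\vert = e(a)+b_{22}-2e(a)\circ b_{22}$ from the $(2,2)$ entries plus positivity of $\vert a-b\vert$, and proves the converse by computing $\vert a-b\vert$ and $\vert 1-a-b\vert$ explicitly and invoking Lemma \ref{l absolute compatibility in hereditary subalgebras}$(a)$. What the paper's route buys is a proof running in exact parallel with Theorem \ref{comm} and producing the explicit expression of $\vert a-b\vert$; what yours buys is economy and symmetry: shorter computations, a weaker input from Proposition \ref{strict}, and one reduction principle serving both implications.
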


\begin{proof} Assume first that $a$ is absolutely compatible with $b.$ By Proposition \ref{p 00} we know that
	\begin{equation}\label{eq dagger} \vert a - b \vert = a + b - 2 a \circ b,
	\end{equation} with $a \circ b \ge 0$. Consider the matrix representations of $a$ and $b$ with respect to  $\{s(a),r(e(a)),n(a)\}$:
	$$a = \left( \begin{array}{ccc} s(a)&0&0\\ 0&e(a)&0\\
	0&0&0 \end{array} \right); \qquad b = \left( \begin{array}{ccc} b_{11}&b_{12}&b_{13}\\ b_{12}^{\ast}&b_{22}&b_{23}\\
	b_{13}^{\ast}&b_{23}^{\ast}&b_{33} \end{array} \right).$$
Now, by matrix multiplication, we get
	$$2 a \circ b = a b + b a = \left( \begin{array}{ccc} 2 b_{11}& b_{12} +
	b_{12} e(a)& b_{13}\\ e(a) b_{12}^{\ast} + b_{12}^{\ast}&2 e(a) \circ
	b_{22}& e(a) b_{23}\\ b_{13}^{\ast} & b_{23}^{\ast}  e(a)& 0 \end{array}
	\right).$$
	
	Since $a \circ b \ge 0$ (see Proposition \ref{p 00}), we conclude that $b_{13} = 0$ and $e(a) b_{23} = 0$, and then $b_{23} = r(e(a)) b_{23} = 0$. Now, by \eqref{eq dagger}, we get
	$$\vert a - b \vert = a + b - 2 a \circ b = \left( \begin{array}{ccc}
	s(a) - b_{11}& - b_{12} e(a)& 0\\ - e(a) b_{12}^{\ast}& e(a) + b_{22}-
	2 e(a) \circ b_{22}& 0\\ 0& 0& b_{33} \end{array} \right)$$
	which assures that $\vert a - b \vert^2 = ( x_{ij} )$ where
\begin{eqnarray*}
		x_{11} &=& (s(a) - b_{11})^2 + b_{12} e(a)^2 b_{12}^{\ast},\\
		x_{12} &=& - (s(a) - b_{11}) b_{12} e(a) - b_{12} e(a) (e(a) + b_{22} - e(a) \circ b_{22}),\\
		x_{13} &=& 0 \hspace{.8 ex} = \hspace{.8 ex} x_{23},\\
		x_{22} &=& e(a) b_{12}^{\ast} b_{12} e(a) + (e(a) + b_{22}- 2 e(a) \circ b_{22})^2,\\
		x_{33} &=& b_{33}^2
\end{eqnarray*}
and $x_{ji} = x_{ij}^{\ast}$. Similarly, we can show that $(a - b)^2 =	(y_{ij})$ where
\begin{eqnarray*}
y_{11} &=& (s(a) - b_{11})^2 + b_{12} b_{12}^{\ast},\\
y_{12} &=& - (s(a) - b_{11}) b_{12} - b_{12} (e(a) - b_{22}), \ \ \ \ \ \ \ \ \ \ \ \ \ \ \ \ \ \ \ \ \ \ \ \ \ \ \ \\
y_{13} &=& 0 \hspace{.8 ex} = \hspace{.8 ex} y_{23},\\
y_{22} &=& b_{12}^{\ast} b_{12} + (e(a) - b_{22})^2,\\
y_{33} &=& b_{33}^2
\end{eqnarray*}	and $y_{ji} = y_{ij}^{\ast}$.
	We apply now that $\vert a - b \vert^2 = (a - b)^2,$ and hence $x_{ij} = y_{ij}$ for $1 \le i, j \le 3$. The equation $x_{11} = y_{11}$ gives $b_{12} e(a)^2 b_{12}^{\ast} = b_{12}b_{12}^{\ast}$, and thus, by Proposition \ref{strict}$(iv)$, we have $b_{12}^{\ast} = 0$ so that $b_{12} = 0$. \smallskip
	
	Now, comparing $x_{22} = y_{22}$, we get $$(e(a) + b_{22}- 2 e(a) \circ b_{22})^2 = (e(a) - b_{22})^2.$$
	Furthermore, since $\vert a - b \vert \ge 0$, we have $e(a) + b_{22}- 2 e(a) \circ b_{22} \ge 0,$ and hence
	$$\vert e(a) - b_{22} \vert = e(a) + b_{22}- 2 e(a) \circ b_{22}.$$
	Thus $e(a)$ is absolutely compatible with $b_{22}$ (compare Proposition \ref{p 00}). We have therefore shown that
	$b = b_{11} + b_{22}+ b_{33},$ where $b_{11} \in [0, s(a)]$, $b_{22} \in [0, r(e(a))]$, $b_{33} \in [0, n(a)],$ and $b_{22}$ is absolutely compatible with $e(a)$.\smallskip
	
	Conversely, let us assume that there exist $b_1 \in [0, s(a)]$, $b_2 \in [0, r(e(a))]$ and $b_3 \in [0, n(a)]$ with $b_2$ absolutely compatible with $e(a)$ such that $b = b_1 + b_2 + b_3$. We shall show that $a$ is absolutely compatible with $b$.
	Having in mind that the projections in $\{s(a),r(e(a)),n(a)\}$ are mutually orthogonal, we deduce that
	\begin{eqnarray*}
		\vert a - b \vert &=& \vert s(a) + e(a) - b_1 - b_2 - b_3 \vert \\
		&=& \vert (s(a) - b_1) + (e(a) - b_2) + (0 - b_3) \vert \\
		&=& \vert s(a) - b_1 \vert + \vert e(a) - b_2 \vert + \vert 0 - b_3
		\vert \\
		&=& (s(a) - b_1) + \vert e(a) - b_2 \vert + b_3
	\end{eqnarray*}
	and
	\begin{eqnarray*}
		\vert 1- a - b \vert &=& \vert 1 - s(a) - e(a) - b_1 - b_2 - b_3 \vert \\
		&=& \vert (0 - b_1) + (r(e(a)) -e(a) - b_2) + (n(a) - b_3) \vert \\
		&=& \vert 0 - b_1 \vert + \vert r(e(a)) - e(a) - b_2 \vert +
		\vert n(a) - b_3 \vert \\
		&=& b_1 + \vert r(e(a)) - e(a) - b_2 \vert + (n(a) - b_3).
	\end{eqnarray*}
	Adding these terms, we get
	$$\vert a - b \vert + \vert 1 - a - b \vert = s(a) + \vert e(a) - b_2 \vert
	+ \vert r(e(a)) - e(a) - b_2 \vert + n(a).$$

Since $e(a)$ is absolutely compatible with $b_2$, Lemma \ref{l absolute compatibility in hereditary subalgebras}$(a)$ implies that $\vert e(a) - b_2 \vert
	+ \vert r(e(a)) - e(a) - b_2 \vert =  r(e(a))$, and thus $\vert a - b \vert + \vert 1 - a - b \vert =1$, as desired.
\end{proof}

\begin{remark}\label{1}{\rm
Let $M$ be a von Neumann algebra and let $a, b \in [0, 1]$. If $a$
is absolutely compatible with $b$, then every distinct pair of elements in the
set $\{ s(a), e(a), n(a), b_1, b_2, b_3 \}$ is absolutely compatible. It is also true that each element in the
set $\{ s(a), e(a), r(e(a)), n(a), a, a + n(a) \}$ is absolutely compatible with every
element in the set $\{ b_1, b_2, b_3, b_1 + b_2, b_1 + b_3, b_2 + b_3,	b \}$.}
\end{remark}

\begin{remark}{\rm Let $M$ be a von Neumann algebra, and let $a, b \in [0, 1]$. If $a \triangle b$ then by Theorem \ref{comp},  $b = b_1 + b_2 + b_3$ such that $b_1 \in [0, s(a)]$, $b_2 \in [0, r(e(a))],$ $b_3 \in [0, n(a)],$ and $b_2 \triangle e(a)$. If in addition, $b$ is strict then $1= r(b) = r(b_1) + r(b_2) + r(b_3)$. It follows that $r(b_1) = s(a)$, $r(b_2)=r(e(a))$ and $r(b_3) = n(a)$. Now $b_1 = 0$ if, and only if, $s(a)=0$; $b_2 = 0$ if, and only if, $e(a)=0$; and $b_3 = 0$ if, and only if, $n(a)=0$. Also if some $b_i \neq 0$ for some $1\leq i \leq 3,$ then it must be strict.}
\end{remark}

We recall that for a projection $p$ and $0\leq a \leq 1$ in a C$^*$-algebra $A$, we have $p a = a p$ if, and only if, $p$ is
absolutely compatible with $a$ (see \cite[Proposition 4.9]{Karn2018}). Thus, it follows from Proposition \ref{strict}$(ii)$ that the next
result is an assimilation of Theorems \ref{comm} and \ref{comp}.

\begin{corollary} Let $0\leq a, b \leq 1$ be elements in a von Neumann algebra $M$. Then $a b = b a$ and $a\triangle b$ hold if, and only if, the following statements hold:
	\begin{enumerate}[$(1)$]
		\item $b = b_1 + b_2 + b_3$ with $0\leq b_1 \leq s(a)$, $0\leq b_2 \leq r(e(a))$ and $0\leq b_3 \leq n(a)$;
		\item $b_2$ is a projection with $b_2 e(a) = e(a) b_2$.
	\end{enumerate}
\end{corollary}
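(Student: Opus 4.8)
The plan is to assemble Theorems~\ref{comm} and~\ref{comp} together with Proposition~\ref{strict}$(ii)$, exploiting the fact that both theorems produce the \emph{same} decomposition of $b$ along the system of mutually orthogonal projections $\{s(a),r(e(a)),n(a)\}$. First I would record that this decomposition is unique: if $b=b_1+b_2+b_3$ with $0\leq b_1\leq s(a)$, $0\leq b_2\leq r(e(a))$ and $0\leq b_3\leq n(a)$, then each bound $0\leq b_i\leq p_i$ forces $b_i\in p_iMp_i$, whence $b_1=s(a)\,b\,s(a)$, $b_2=r(e(a))\,b\,r(e(a))$ and $b_3=n(a)\,b\,n(a)$. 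Thus the middle block $b_2$ is intrinsic to the pair $a,b$, independently of which of the two theorems we invoke.

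For the ``only if'' direction I would assume $ab=ba$ and $a\triangle b$. Theorem~\ref{comm} supplies the decomposition with $b_2 e(a)=e(a)b_2$, while Theorem~\ref{comp} supplies it with $b_2$ absolutely compatible with $e(a)$; by the uniqueness just noted these two middle blocks coincide, so this single $b_2$ simultaneously commutes with $e(a)$ and is absolutely compatible with it. Since $e(a)$ is a strict element of the hereditary von Neumann subalgebra $r(e(a))Mr(e(a))$ and $b_2$ lies in that subalgebra, Lemma~\ref{l absolute compatibility in hereditary subalgebras}$(a)$ lets me read $b_2\triangle e(a)$ inside $r(e(a))Mr(e(a))$. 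Applying Proposition~\ref{strict}$(ii)$ there, with $e(a)$ playing the role of the strict element, forces $b_2$ to be a projection, which yields $(1)$ and $(2)$.

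For the ``if'' direction I would assume $(1)$ and $(2)$. The projection $b_2$ commutes with $e(a)$, so Theorem~\ref{comm} immediately gives $ab=ba$. To obtain $a\triangle b$ I would convert the commutation $b_2e(a)=e(a)b_2$ into absolute compatibility: since $b_2$ is a projection, the recalled equivalence \cite[Proposition~4.9]{Karn2018} gives $b_2\triangle e(a)$, and then Theorem~\ref{comp} delivers $a\triangle b$.

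The only genuinely delicate step is the identification of the two middle blocks as one and the same $b_2$; once uniqueness of the block-diagonal decomposition is in hand, the remainder is a direct bookkeeping of the quoted results, and no new estimate or computation is required.
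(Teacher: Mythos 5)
Your proof is correct and follows essentially the route the paper intends: the paper offers no detailed proof, merely noting that the corollary is an ``assimilation'' of Theorems~\ref{comm} and~\ref{comp} via Proposition~\ref{strict}$(ii)$ and the recalled equivalence from \cite[Proposition~4.9]{Karn2018}, which is exactly the combination you carry out. Your added care --- the uniqueness of the block decomposition identifying the two middle blocks, and the use of Lemma~\ref{l absolute compatibility in hereditary subalgebras}$(a)$ to apply Proposition~\ref{strict}$(ii)$ inside $r(e(a))Mr(e(a))$ where $e(a)$ is strict --- simply makes explicit the details the paper leaves to the reader.
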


We further sharpen the conclusion in Theorem \ref{comp}.

\begin{theorem}\label{comp1} Let $a$ and $b$ be elements in a von Neumann algebra $M$ such that $0\leq a, b \leq 1$. Then
	$a$ and $b$ are absolutely compatible if, and only if, there exist a set of mutually orthogonal projections
	$$\{ s(a), s(b_2), r(e(b_2)), n_1(b_2), n(a) \}$$
	satisfying
	$$s(a) + s(b_2) + r(e(b_2)) + n_1(b_2) + n(a) = 1,$$
	$0\leq b_1 \leq s(a)$, $0\leq a_1 \leq s(b_2)$, $0\leq a_2, e(b_2) \leq r(e(b_2))$, $0\leq a_3 \leq n_1(b_2)$ and
	$0\leq b_3 \leq n(a)$ with $a_2$ absolutely compatible with $e(b_2),$ $a = s(a) + a_1 + a_2 + a_3,$ and $b = b_1 + s(b_2) + e(b_2) + b_3$.
\end{theorem}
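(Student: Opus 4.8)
The plan is to obtain the five-fold decomposition by applying Theorem \ref{comp} twice, exploiting the symmetry of absolute compatibility together with the localisation provided by Lemma \ref{l absolute compatibility in hereditary subalgebras}.

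For the direct implication, I would first invoke Theorem \ref{comp} to write $b = b_1 + b_2 + b_3$ with $0\leq b_1 \leq s(a)$, $0\leq b_2 \leq r(e(a))$, $0\leq b_3 \leq n(a)$ and $b_2 \triangle e(a)$. The key observation is that $b_2$ and $e(a)$ both lie in the hereditary von Neumann subalgebra $r(e(a))\, M\, r(e(a))$, whose unit is $r(e(a))$, that $e(a)$ is strict there, and that by Lemma \ref{l absolute compatibility in hereditary subalgebras}$(a)$ the relation $b_2 \triangle e(a)$ persists inside this subalgebra. Since absolute compatibility is symmetric (the defining identity $|a-b|+|1-a-b|=1$ is symmetric in the two entries), we also have $e(a) \triangle b_2$, so I can apply Theorem \ref{comp} a \emph{second} time, now inside $r(e(a))\, M\, r(e(a))$ and with $b_2$ playing the role of the reference element. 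Writing $\{ s(b_2), r(e(b_2)), n_1(b_2)\}$ for the support system of $b_2$ relative to the unit $r(e(a))$, so that $n_1(b_2) = r(e(a)) - r(b_2)$ and $s(b_2) + r(e(b_2)) + n_1(b_2) = r(e(a))$, this yields $e(a) = a_1 + a_2 + a_3$ with $0\leq a_1 \leq s(b_2)$, $0\leq a_2 \leq r(e(b_2))$, $0\leq a_3 \leq n_1(b_2)$ and $a_2 \triangle e(b_2)$. Adding $s(a)$ and recalling that $b_2 = s(b_2) + e(b_2)$ gives $a = s(a) + a_1 + a_2 + a_3$ and $b = b_1 + s(b_2) + e(b_2) + b_3$; the five projections are mutually orthogonal and sum to $s(a) + r(e(a)) + n(a) = 1$, as required.

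For the converse, the point is that the stated hypotheses arrange $a$ and $b$ to be simultaneously block-diagonal with respect to these five mutually orthogonal projections, since a positive element dominated by a projection $q$ necessarily lives in the corner $qMq$. I would read off the diagonal blocks: over $s(a)$ the pair is $(s(a), b_1)$, over $s(b_2)$ it is $(a_1, s(b_2))$, over $r(e(b_2))$ it is $(a_2, e(b_2))$, over $n_1(b_2)$ it is $(a_3, 0)$, and over $n(a)$ it is $(0, b_3)$. In the first two blocks one entry equals the unit of the block, a projection, which is absolutely compatible with any positive contraction it dominates (the unit commutes with everything, and a projection is absolutely compatible with an element precisely when they commute, cf. \cite[Proposition 4.9]{Karn2018}); in the last two blocks one entry vanishes, and $0$ is orthogonal to, hence absolutely compatible with, anything; in the middle block $a_2 \triangle e(b_2)$ holds by hypothesis. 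Applying the block version of Lemma \ref{l absolute compatibility in hereditary subalgebras}$(b)$, iterated across the five orthogonal summands, then delivers $a \triangle b$ in $M$.

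I do not expect a genuine obstacle, as the argument is essentially a second application of Theorem \ref{comp}. The only points that demand care are bookkeeping ones: fixing the meaning of the relative support system $\{ s(b_2), r(e(b_2)), n_1(b_2)\}$ formed \emph{inside} $r(e(a))\, M\, r(e(a))$ and verifying that it refines $r(e(a))$; transferring the compatibility $b_2 \triangle e(a)$ between $M$ and this subalgebra via Lemma \ref{l absolute compatibility in hereditary subalgebras}$(a)$ combined with the symmetry of $\triangle$; and extending the two-block statement in Lemma \ref{l absolute compatibility in hereditary subalgebras}$(b)$ to five orthogonal blocks, which is immediate by induction on the number of summands.
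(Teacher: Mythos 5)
Your proposal is correct and follows essentially the same route as the paper: the forward direction is identical (a second application of Theorem \ref{comp} to the pair $(e(a),b_2)$ inside the hereditary subalgebra $r(e(a))Mr(e(a))$, transferred there via Lemma \ref{l absolute compatibility in hereditary subalgebras}$(a)$ and the symmetry of $\triangle$). Your converse, which checks absolute compatibility block-by-block and invokes an iterated form of Lemma \ref{l absolute compatibility in hereditary subalgebras}$(b)$, is merely a repackaging of the paper's direct computation of $|a-b|+|1-a-b|$ across the five mutually orthogonal summands, so no genuinely different ideas are involved.
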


\begin{proof} First, let $a$ be absolutely compatible with $b$. By Theorem \ref{comp}, there exist $b_1 \in [0, s(a)]$, $b_2 \in [0, r(e(a))]$ and $b_3 \in [0, n(a)]$ with $e(a)$ absolutely compatible with $b_2$ such that $b = b_1 + b_2 + b_3$.\smallskip

Consider now the absolutely compatible pair $(b_2, e(a))$ in $[0, r(e(a))]$ in the von Neumann algebra $r(e(a)) M r(e(a))$ (cf. Lemma \ref{l absolute compatibility in hereditary subalgebras}). Put $n_1(b_2) = r(e(a)) -
	s(b_2) - r(e(b_2))$. Now, by Theorem \ref{comp}, there exist $a_1 \in
	[0, s(b_2)]$, $a_2 \in [0, r(e(b_2))]$ and $a_3 \in [0, n_1(b_2)]$ with
	$a_2$ absolutely compatible with $e(b_2)$ such that $e(a) = a_1 + a_2 + a_3$, and thus $a = s(a) + a_1 + a_2 + a_3$ and $b = b_1 + s(b_2) + e(b_2) +
	b_3$.\smallskip
	
	Conversely, assume that there exists a set of mutually orthogonal projections
	$$\{ s(a), s(b_2), r(e(b_2)), n_1(b_2), n(a) \}$$
	satisfying
	$$s(a) + s(b_2) + r(e(b_2)) + n_1(b_2) + n(a) = 1,$$ $b_1 \in [0, s(a)]$, $a_1 \in [0, s(b_2)]$,
	$a_2, e(b_2) \in [0, r(e(b_2))]$, $a_3 \in [0, n_1(b_2)]$ and
	$b_3 \in [0, n(a)]$ with $a_2$ absolutely compatible with $e(b_2),$
	$a = s(a) + a_1 + a_2 + a_3$ and $b = b_1 + s(b_2) + e(b_2) + b_3$.
	Under these hypothesis we have
	\begin{eqnarray*}
		\vert a - b \vert &=& \vert (s(a) + a_1 + a_2 + a_3) - (b_1 + s(b_2) + e(b_2) + b_3) \vert \\
		&=& \vert (s(a) - b_1) + (a_1 - s(b_2)) + (a_2 - e(b_2)) + a_3 - b_3
		\vert \\
		&=& \vert s(a) - b_1 \vert + \vert a_1 - s(b_2) \vert + \vert a_2 -
		e(b_2) \vert + \vert a_3 \vert + \vert - b_3 \vert \\
		&=& (s(a) - b_1) + (s(b_2) - a_2) + \vert a_2 -
		e(b_2) \vert +  a_3 + b_3
	\end{eqnarray*}
	and
	$$\vert 1 - a - b \vert = \vert 1 - (s(a) + a_1 + a_2 + a_3) - (b_1 +
	s(b_2) + e(b_2) + b_3) \vert $$
	$$= \vert - b_1 - a_1  + (r(e(b_2)) - a_2 - e(b)) + ( n_1(b_2) - a_3)
	+ (n(a) - b_3) \vert $$
	$$= \vert - b_1 \vert + \vert - a_1 \vert + \vert r(e(b_2)) - a_2 -
	e(b_2) \vert + \vert n_1(b_2) - a_3 \vert + \vert n(a) - b_3 \vert$$
	$$= b_1 +  a_1 + \vert r(e(b_2)) - a_2 - e(b_2) \vert + (n_1(b_2) - a_3) + (n(a) - b_3).$$
	
Having in mind that $a_2$ is absolutely compatible with $e(b_2)$, Lemma \ref{l absolute compatibility in hereditary subalgebras} proves that
	$$\vert a_2 - e(b_2) \vert + \vert r(e(b_2)) - a_2 - e(b_2) \vert =
	r(e(b_2)).$$
	Thus, by adding the last three equations, we get $\vert a - b \vert + \vert
	1 - a - b \vert = 1$, which concludes the proof.
\end{proof}	

Let us observe that some of the projections, and consequently, some of the corresponding elements in Theorem \ref{comp1} may be zero.

\begin{corollary}\label{c precise conditions for commutativity} Let $a$ and $b$ be elements in a von Neumann algebra $M$ such that $0\leq a, b \leq 1$. Suppose that $a$ is absolutely compatible with $b$. Then $a b = b a$ if, and only if, the element $e(b_2)$ given by Theorem \ref{comp1} is zero, or equivalently, $b_2$ is a projection.
\end{corollary}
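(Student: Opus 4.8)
The plan is to combine Corollary~\ref{c precise conditions for commutativity}'s two reference theorems, namely Theorem~\ref{comm} (commutativity decomposition) and Theorem~\ref{comp} (absolute compatibility decomposition), both applied with respect to the same system $\{s(a), r(e(a)), n(a)\}$. Since $a$ is already assumed absolutely compatible with $b$, Theorem~\ref{comp} furnishes the canonical decomposition $b = b_1 + b_2 + b_3$ with $b_1 \in [0,s(a)]$, $b_2 \in [0,r(e(a))]$, $b_3 \in [0,n(a)]$ and $b_2 \triangle e(a)$. The whole argument then reduces to showing that, granting this fixed decomposition, the commutation $ab = ba$ is controlled entirely by the middle block, and in turn by whether $e(b_2) = 0$.

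First I would observe that, because $a = s(a) + e(a)$ with $s(a) \in [0,s(a)]$ and $e(a) \in [0,r(e(a))]$ both living in mutually orthogonal corners, the products $ab$ and $ba$ decompose blockwise along $\{s(a), r(e(a)), n(a)\}$. Concretely $a$ acts as $s(a)$ on the first corner and as the strict element $e(a)$ on the second corner, while it annihilates the third; since $b_1, b_3$ commute automatically with $a$ inside their respective corners ($s(a)$ is the unit of the first corner, and $a$ is zero on the third), the only potential obstruction to $ab = ba$ is whether $e(a)$ commutes with $b_2$ inside the hereditary subalgebra $r(e(a)) M r(e(a))$. Thus $ab = ba$ holds in $M$ if and only if $e(a) b_2 = b_2 e(a)$.

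Next I would bring in the hypothesis that $b_2 \triangle e(a)$ together with Proposition~\ref{strict}$(ii)$. Working inside $r(e(a)) M r(e(a))$, the element $e(a)$ is strict by construction (this is exactly the point of the almost-strict-part decomposition), and $b_2$ is absolutely compatible with it. Proposition~\ref{strict}$(ii)$ then states that if such a $b_2$ commutes with the strict element $e(a)$, then $b_2$ must be a projection. Conversely, if $b_2$ is a projection absolutely compatible with $e(a)$, then by the recalled fact that a projection is absolutely compatible with a positive element exactly when they commute (cf. \cite[Proposition 4.9]{Karn2018}), we get $e(a) b_2 = b_2 e(a)$. Hence, under the standing absolute compatibility, $ab = ba$ is equivalent to $b_2$ being a projection. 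Finally, $b_2$ is a projection if and only if its almost strict part $e(b_2) = b_2 - s(b_2)$ vanishes, which is precisely the reformulation supplied by applying the decomposition of Theorem~\ref{comp1} to the pair $(b_2, e(a))$; this ties the condition back to the specific element $e(b_2)$ named in that theorem.

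The step I expect to be the main (though modest) obstacle is making the reduction ``$ab = ba \iff e(a)b_2 = b_2 e(a)$'' airtight: one must verify that the cross blocks $b_{12}, b_{23}, b_{13}$ in the matrix representation of $b$ genuinely vanish under the absolute compatibility hypothesis, so that no hidden off-diagonal commutation condition survives. This is not automatic from commutativity alone, but it is already established in the proof of Theorem~\ref{comp}, where the sign condition $a\circ b \geq 0$ forces $b_{13} = 0$, $b_{23} = 0$, and the identity $b_{12} e(a)^2 b_{12}^\ast = b_{12} b_{12}^\ast$ combined with Proposition~\ref{strict}$(iv)$ forces $b_{12} = 0$. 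I would cite that computation rather than repeat it, so that $b$ is already block-diagonal and the commutation question collapses cleanly onto the single middle entry, after which Proposition~\ref{strict}$(ii)$ and the projection characterization close the equivalence.
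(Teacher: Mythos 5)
Your proposal is correct, and it is genuinely more direct than the paper's own proof, although both arguments turn on the same key fact, Proposition \ref{strict}$(ii)$. The paper argues inside the finer frame of Theorem \ref{comp1}: it reduces $ab=ba$ to the commutation of the innermost pair, $a_2\, e(b_2)=e(b_2)\, a_2$, and then applies Proposition \ref{strict}$(ii)$ with $e(b_2)$ as the strict element of the corner $r(e(b_2))\,M\,r(e(b_2))$, concluding that $a_2$ is a projection; it then needs two further steps --- $a_2\leq e(a)$ together with $s(e(a))=0$ forces $a_2=0$, hence $a\perp e(b_2)$ and $r(e(b_2))\leq n(a)$, which against the orthogonality $r(e(b_2))\, n(a)=0$ coming from Theorem \ref{comp1} finally forces $e(b_2)=0$. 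You instead stay at the first level of decomposition (Theorem \ref{comp}), reduce $ab=ba$ to $e(a)b_2=b_2e(a)$, and apply Proposition \ref{strict}$(ii)$ with the roles swapped --- $e(a)$ is the strict element of $r(e(a))\,M\,r(e(a))$ and $b_2$ is the commuting, absolutely compatible element --- which yields at once that $b_2$ is a projection; your converse direction comes from \cite[Proposition 4.9]{Karn2018}, which the paper has already recalled, and the identification with the stated condition is the triviality that $e(b_2)=b_2-s(b_2)=0$ if, and only if, $b_2=s(b_2)$. Your route buys brevity: it skips the paper's chain ``$a_2$ is a projection $\Rightarrow a_2=0 \Rightarrow r(e(b_2))\leq n(a) \Rightarrow r(e(b_2))=0$'' altogether, at the mild cost of invoking the external characterization from \cite{Karn2018}; the paper's longer route, in exchange, identifies along the way exactly which blocks of $a$ vanish when $a$ and $b$ commute, which is the information displayed in Remark \ref{r final vN}$(1)$. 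One small remark: the cross-block vanishing that you flag as the ``main obstacle'' needs no appeal to the internals of the proof of Theorem \ref{comp} --- the statement of that theorem already asserts $b=b_1+b_2+b_3$ with each $b_i$ in the corresponding diagonal corner, which is precisely the block-diagonality your reduction requires, and Lemma \ref{l absolute compatibility in hereditary subalgebras}$(a)$ is what legitimizes transferring $b_2\triangle e(a)$ into the corner where Proposition \ref{strict}$(ii)$ is applied.
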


\begin{proof} Following the constructions of Theorem \ref{comp1}, we deduce that $a b = b a$ if, and only if,
	$a_2 e(b_2) = e(b_2) a_2$.\smallskip
	
We also know that, if $e(b_2) = 0$, then trivially $a_2 e(b_2) = e(b_2) a_2$.\smallskip
Having in mind that $e(b_2)$ is strict in the von Neumann algebra $r(e(b_2)) M r(e(b_2))$ and $a_2 \in r(e(b_2)) M r(e(b_2))$, we deduce via Proposition \ref{strict}$(ii)$ that $a_2$ is a projection. Furthermore, since $a_2 \le e(a)$ and $s(e(a)) = 0$, we get $a_2 = 0$. This implies that $a= s(a)+a_1+a_3$ is orthogonal to $e(b_2)$, and hence $r(e(b_2))\le n(a) = 1-r(a)$. But Theorem \ref{comp1} also implies that $r(e(b_2)) n(a) = 0$, which implies that $r(e(b_2)) = 0,$ or equivalently, $e(b_2) = 0$.
\end{proof}

We resume our previous conclusion in a more schematic form.

\begin{remark}\label{r final vN}{\rm Let $M$ be a von Neumann algebra and let $0\leq a, b \leq 1$ such that $a\triangle b$.
	\begin{enumerate}[$(1)$]
		\item If $a$ and $b$ commute, then there exist a set of mutually
		orthogonal projections $\{  s(a), b_2, n_1(b_2), n(a) \}$ in $M$
		whose sum is $1$ such that $a$ and $b$ have the following
		matrix representations with respect to this system:
		$$a = \left(
          \begin{array}{cccc}
            s(a) & 0 & 0 &  0 \\
		0 & a_1 & 0 & 0 \\
		0 & 0 & a_3 & 0 \\
		0 & 0 & 0 & 0
          \end{array}
        \right)
 \quad \textrm{and} \quad
		b =  \left( \begin{array}{cccc}
		b_1 & 0 & 0 &  0 \\
		0 & b_2 & 0 & 0 \\
		0 & 0 & 0 & 0 \\
		0 & 0 & 0 & b_3 \end{array} \right).$$
		Some of these projections may be zero.
		\item If $a$ and $b$ do not commute, then there exist a set of
		mutually orthogonal projections $\{  s(a), s(b_2), q_1, q_2,
		n_1(b_2), n(a) \}$ in $M$ whose sum is $1$ such that $a$ and
		$b$ have the following matrix representations with respect to this
		system:
		$a =  \left( \begin{array}{cccccc}
		s(a) & 0 & 0 &  0 & 0 & 0 \\
		0 & a_1 & 0 & 0 & 0 & 0 \\
		0 & 0 & a_{11} & a_{12} & 0 & 0 \\
		0 & 0 & a_{12}^{\ast} & a_{22} & 0 & 0 \\
		0 & 0 & 0 & 0 & a_3 & 0 \\
		0 & 0 & 0 & 0 & 0 & 0 \end{array} \right)$ and $b =  \left( \begin{array}{cccccc}
		b_1 & 0 & 0 &  0 & 0 & 0 \\
		0 & s(b_2) & 0 & 0 & 0 & 0 \\
		0 & 0 & b_{11} & - a_{12} & 0 & 0 \\
		0 & 0 & - a_{12}^{\ast} & b_{22} & 0 & 0 \\
		0 & 0 & 0 & 0 & 0 & 0 \\
		0 & 0 & 0 & 0 & 0 & b_3 \end{array} \right).$
		Some of these projections may be zero. However, $q_1$ and
		$q_2$ can not be zero. Similarly, $a_{12} \not= 0$. Moreover,
		$a_{11}$ commutes with $b_{11}$ and $a_{22}$ commutes with
		$b_{22}$.
	\end{enumerate}}	
\end{remark}

\section{Absolute compatibility in the case of matrices}

In this section we shall particularize the main conclusions in section \ref{sec: AC in vN+} to the case of matrix algebras. Note that, in a general von Neumann algebra $M$, $0$ and $1$ are absolutely compatible with every element $a \in [0, 1]_{M}$. Thus, in order to describe absolutely compatible elements in $M$ it suffices to discuss the absolutely compatible pairs in $[0, 1]_M \setminus \{ 0, 1 \}$.\smallskip

\subsection{Absolute compatibility in $\mathbb{M}_2$} In this subsection, we discuss the case of 2 by 2 matrices due to its special importance. Commuting pairs of absolutely compatible 2 by 2 matrices are described in the next result.

\begin{proposition}\label{p AC pairs in M2} Let $0\leq a,b\leq 1$ in $\mathbb{M}_2$ such that $a\triangle b$ and $a b = ba $. Then the following statements hold:
\begin{enumerate}[$(a)$]\item If $a$ is strict then there exit $\alpha,\beta\in (0,1)$ in $\mathbb{R}$ and a minimal projection $p\in \mathbb{M}_2$ such that $b = p$ and $ a = \alpha p + \beta (1-p)$;
\item If $a$ is not strict then one of the next statements holds:
\begin{enumerate}[$(b.1)$]\item If $e(a) = 0$, then there exist $\alpha,\beta\in [0,1]$ in $\mathbb{R}$ and a minimal projection $p\in \mathbb{M}_2$ such that $\alpha+\beta >0$ and $a = p$ and $ b = \alpha p + \beta (1-p)$;
\item If $s(a),e(a) \neq 0$, then there exist $\alpha,\beta\in [0,1]$ in $\mathbb{R}$ and a minimal projection $p\in \mathbb{M}_2$ such that $a = p +\alpha (1-p)$ and $b = \beta p + (1-p)$ with $\alpha\neq 0$ or $a = p +\alpha (1-p)$ and $b = \beta p $ with $\alpha,\beta\neq 0$;
\item If $n(a),e(a) \neq 0$, then there exist $\alpha,\beta\in [0,1]$ in $\mathbb{R}$ and a minimal projection $p\in \mathbb{M}_2$ such that $a =\alpha p $ and $b = p + \beta (1-p)$ with $\alpha\neq 0$ or $a = \alpha  p $ and $b = \beta (1-p) $ with $\alpha,\beta\neq 0$;
\end{enumerate}
\end{enumerate}
\end{proposition}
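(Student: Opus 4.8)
The plan is to exploit the very restricted projection structure of $\mathbb{M}_2$: every non-trivial projection is minimal, each corner $p\,\mathbb{M}_2\, p$ and $(1-p)\,\mathbb{M}_2\,(1-p)$ cut out by a minimal projection $p$ is one-dimensional, and a strict element of $\mathbb{M}_2$ is exactly one whose two eigenvalues both lie in $(0,1)$ (indeed $s(a)$ is the eigenprojection for the eigenvalue $1$ and $n(a)$ that for the eigenvalue $0$). Since we work in $[0,1]_{\mathbb{M}_2}\setminus\{0,1\}$, both $a$ and $b$ are non-trivial. The three earlier results that drive the argument are Proposition \ref{strict}$(ii)$, Theorem \ref{comm}, and the Corollary characterising those pairs for which simultaneously $ab=ba$ and $a\triangle b$ (namely $b=b_1+b_2+b_3$ with $b_2$ a projection commuting with $e(a)$).

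For part $(a)$ I would reason as follows. As $a$ is strict, $a\triangle b$ and $ab=ba$, Proposition \ref{strict}$(ii)$ forces $b$ to be a projection; being non-trivial it is a minimal projection $p$. Commutativity says that $p$ reduces the self-adjoint element $a$, so $a=pap+(1-p)a(1-p)$, and since each corner is one-dimensional this reads $a=\alpha p+\beta(1-p)$. The scalars $\alpha,\beta$ are precisely the two eigenvalues of $a$, and strictness places both in $(0,1)$, yielding the stated form.

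For part $(b)$ I would first organise the subcases through the rank-$2$ constraint: the projections $\{s(a),r(e(a)),n(a)\}$ are mutually orthogonal with sum $1$, hence at most two are non-zero, and since $a$ is not strict at least one of $s(a),n(a)$ is non-zero. If $e(a)=0$ then $a=s(a)$ is a non-trivial, hence minimal, projection $p$; by \cite[Proposition 4.9]{Karn2018} the relation $p\triangle b$ is equivalent to $pb=bp$, and the positive contractions commuting with $p$ are exactly $b=\alpha p+\beta(1-p)$ with $\alpha,\beta\in[0,1]$, where $b\neq 0$ gives $\alpha+\beta>0$; this is $(b.1)$. If $e(a)\neq 0$, then $r(e(a))\neq 0$ fills one dimension, so exactly one of $s(a),n(a)$ is non-zero (and minimal) while the third projection vanishes; setting $p=s(a)$ gives $a=p+\alpha(1-p)$ with $\alpha\in(0,1)$ carried by $e(a)$, and the case $n(a)\neq 0$ is symmetric with $a=\alpha p$ and $p=r(e(a))$. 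To pin down $b$ I would invoke the Corollary above: it yields $b=b_1+b_2+b_3$ with $b_1\in[0,s(a)]$, $b_2\in[0,r(e(a))]$, $b_3\in[0,n(a)]$ and, crucially, $b_2$ a projection. Since $r(e(a))$ is minimal, its one-dimensional corner contains only the projections $0$ and $r(e(a))$, so $b_2\in\{0,r(e(a))\}$; the summand in the vanishing third corner is automatically $0$, while the remaining scalar corner contributes $\beta$ times its minimal projection. Reading off the two values of $b_2$ produces the two displayed alternatives of $(b.2)$ (respectively $(b.3)$), with $\alpha\neq 0$ coming from $e(a)\neq 0$ and $\beta\neq 0$ from $b\neq 0$ in the option where one corner of $b$ is empty.

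I do not expect a genuinely hard step. Once Proposition \ref{strict}$(ii)$ and the Corollary are available, everything reduces to the combinatorics of how three orthogonal projections can fill a rank-$2$ algebra and to the triviality that a one-dimensional corner carries only two projections. The only points that demand care are checking that the listed subcases are exhaustive and mutually exclusive (exactly the rank-$2$ count above) and placing the scalars $\alpha,\beta$ correctly in $(0,1)$ versus $[0,1]$, according to whether the corresponding summand is an almost-strict part (open interval) or a free positive contraction in a scalar corner (closed interval).
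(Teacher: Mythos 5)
Your proposal is correct and follows essentially the same route as the paper: the authors' proof simply reads the statement off from the corollary characterising pairs with $ab=ba$ and $a\triangle b$ (equivalently, Remark \ref{r final vN}$(1)$), leaving the case analysis to the reader, and your argument fills in exactly those details using that corollary, the fact that a strict element absolutely compatible and commuting with $b$ forces $b$ to be a projection, and the rank-two combinatorics of $\{s(a),r(e(a)),n(a)\}$. Your explicit appeal to the standing convention that $a,b\in[0,1]_{\mathbb{M}_2}\setminus\{0,1\}$ is also the right reading, since without it part $(a)$ would fail for $b\in\{0,1\}$.
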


\begin{proof} If we analyze each one of the cases, the statement is a straight consequence of the conclusions in Corollary \ref{c precise conditions for commutativity} or Remark \ref{r final vN}$(1)$, the details are left to the reader.
\end{proof}

In our next result we study absolutely compatible pairs in which one of the elements is not strict.

\begin{proposition}\label{non-strict} Let $0\lneqq a,b\lneqq 1$ in $\mathbb{M}_2$ be an absolutely compatible pair. If $a$ is not strict, then there exists a minimal projection $p$ in $\mathbb{M}_2$ such that one of the following three cases arises:
\begin{enumerate}[$(1)$]
\item $a = p$ and $b = \lambda p + \mu (1-p)$ for some $\lambda, \mu \in [0, 1]$ with $0< \lambda + \mu <2$;
\item $a = p + t (1-p)$ and $b = \lambda p $ for some $t \in [0, 1)$ and $\lambda \in (0, 1]$;
\item $a = p + t (1-p)$ and $b = \lambda p + (1-p)$ for some $t, \lambda \in [0, 1)$.
\end{enumerate}
\end{proposition}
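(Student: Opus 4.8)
The plan is to feed the pair straight into Theorem \ref{comp} and then exploit the scarcity of projections in $\mathbb{M}_2$. First I would record the rank bookkeeping: since $0 \lneqq a \lneqq 1$ we have $s(a) \ne 1$ and $n(a) \ne 1$, while $a$ not strict forces $r(e(a)) \ne 1$; thus $s(a), r(e(a)), n(a)$ are three mutually orthogonal projections, each of rank at most $1$, whose ranks add up to $2$. Consequently their ranks are $1, 1, 0$ in some order, so exactly one of them vanishes and the other two are complementary minimal projections. This produces three configurations, according to whether $r(e(a))$, $n(a)$, or $s(a)$ is the one that vanishes, and Theorem \ref{comp} writes $b = b_1 + b_2 + b_3$ with $0 \le b_1 \le s(a)$, $0 \le b_2 \le r(e(a))$, $0 \le b_3 \le n(a)$ and $b_2 \triangle e(a)$.

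Next I would dispose of the two configurations that match the statement directly. If $r(e(a)) = 0$ then $e(a) = 0$, so $a = s(a) =: p$ is a minimal projection, $n(a) = 1 - p$, and $b_2 = 0$; since the corners $p \mathbb{M}_2 p$ and $(1-p)\mathbb{M}_2(1-p)$ are one-dimensional, $b = \lambda p + \mu(1-p)$, and $b \ne 0, 1$ gives $0 < \lambda + \mu < 2$, i.e. case (1). If $n(a) = 0$ then $p := s(a)$ and $1 - p = r(e(a))$ are complementary minimal projections and $e(a)$ is strict in $(1-p)\mathbb{M}_2(1-p) \cong \mathbb{C}$, hence $e(a) = t(1-p)$ with $t \in (0,1)$ and $a = p + t(1-p)$. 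Writing $b_1 = \lambda p$, $b_3 = 0$ and $b_2 = \mu(1-p)$, the constraint $b_2 \triangle e(a)$ reduces, via Lemma \ref{l absolute compatibility in hereditary subalgebras}$(a)$, to the scalar identity $|\mu - t| + |1 - \mu - t| = 1$; as $t \in (0,1)$ this holds precisely when $\mu \in \{0, 1\}$. The value $\mu = 0$ yields $b = \lambda p$ with $\lambda \in (0,1]$ (case (2)) and $\mu = 1$ yields $b = \lambda p + (1-p)$ with $\lambda \in [0,1)$ (case (3)), the endpoint exclusions coming once more from $b \ne 0, 1$.

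The step I expect to be the main obstacle is the remaining configuration $s(a) = 0$, i.e. the case in which $a$ has no spectral value equal to $1$. Here $q := r(e(a))$ and $1 - q = n(a)$ are minimal, $a = e(a) = t q$ with $t \in (0,1)$, and the computation above again gives $b_1 = 0$ and $b = \mu q + \nu (1-q)$ with $\mu \in \{0, 1\}$, $\nu \in [0, 1]$. The delicate point is that the normal form $a = t q$ is not literally among (1)--(3), so the real work is to reconcile it with the stated trichotomy. The tool I would reach for is the order-involution $x \mapsto 1 - x$: the identity $|a - b| + |1 - a - b| = 1$ is manifestly invariant under $(a,b) \mapsto (1-a, 1-b)$, and this sends the present configuration to a pair of the type treated in the previous paragraph (where $n$ vanishes). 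In the subcases where compatibility forces $b$ to carry the eigenvalue $1$ (when $\mu = 1$) or to be a projection (when $\nu = 1$), I would instead relabel the pair so that the element possessing eigenvalue $1$ plays the role of $a$, landing in case (2) or case (1). Verifying that this reduction is genuinely exhaustive, and fixing the parameter ranges so that $b \ne 0, 1$ is respected throughout, is the part of the argument I would scrutinise most carefully against the precise statement.
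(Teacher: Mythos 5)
Your setup and your first two configurations are correct, and they are essentially the paper's own proof: the paper takes $p=s(a)$, observes that minimality of $1-p$ forces $n(a)=0$ or $r(e(a))=0$, and then invokes Theorem \ref{comp} exactly as you do, so your scalar corner computation showing $\mu\in\{0,1\}$ (hence cases $(2)$ and $(3)$) and the $b\neq 0,1$ bookkeeping (case $(1)$) are the same argument written out in more detail. The real issue is the configuration $s(a)=0\neq n(a)$, which you rightly single out as the obstacle, but your proposed repair cannot be completed --- because the statement is literally false there. Take a minimal projection $q$ and set $a=\frac{1}{2}q$, $b=\frac{1}{2}(1-q)$. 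Then $0\lneqq a,b\lneqq 1$, $a\perp b$ and hence $a\triangle b$ (indeed $|a-b|=|1-a-b|=\frac{1}{2}1$), and $a$ is not strict since $n(a)=1-q\neq 0$. Yet no minimal projection $p$ realises any of $(1)$--$(3)$: each of the three cases forces $1$ to be an eigenvalue of $a$, whereas the spectrum of $a$ is $\{0,\frac{1}{2}\}$; the same holds for $b$, so relabelling the pair does not help either. Your involution $(a,b)\mapsto (1-a,1-b)$ does produce a pair fitting case $(3)$, but the proposition asserts the trichotomy for $(a,b)$ itself, so the transported conclusion is not the required one; and the swap $a\leftrightarrow b$, which rescues the subcases $b=q+\nu(1-q)$ and $b=1-q$ of this configuration, is likewise not permitted by the wording and in any event fails in the orthogonal subcase above.

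What your scrutiny has actually uncovered is a gap in the paper itself: its proof opens with the claim ``since $0\neq a$ is not strict, $s(a)\neq 0$'', which is a non sequitur --- non-strictness of $a$ means $s(a)\neq 0$ \emph{or} $n(a)\neq 0$ --- and the case $s(a)=0$ is never examined there. What the paper's argument, and equally your first two configurations, genuinely establishes is the proposition under the stronger hypothesis $s(a)\neq 0$. To cover every non-strict $a$ one must either assume this, or enlarge the list of normal forms so that it is closed under $x\mapsto 1-x$ and under interchanging $a$ and $b$ (in particular adding the orthogonal configuration $a=tq$, $b=\nu(1-q)$ with $t,\nu\in(0,1)$). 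So the part of your argument that is rigorous matches the paper, and the part you flagged for careful checking is not fixable as stated: the fault lies in the proposition, not in your reduction.
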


\begin{proof} Since $0\neq a$ is not strict then $s(a)\neq 0,$ and thus there exists a minimal projection $p= s(a)$ in $\mathbb{M}_2$ such that $a = p +t (1-p)$ for some $t\in [0,1]$ with $1-p \leq n(a) + r(e(a))$ and $1-p$ minimal. It follows that $n(a) =0$ or $r(e(a))=0$.\smallskip

If $n(a)\neq 0$, then $a= p= s(a),$ and by applying Theorem \ref{comp1} (see also Theorem \ref{comp} or Remark \ref{r final vN}$(2)$) we deduce the existence of $\lambda, \mu \in [0, 1]$ with $0< \lambda + \mu <2$ such that $b = \lambda s(a) + \mu n(a)$.\smallskip

If $e(a)\neq 0$, then $1-p = r(e(a))$ and Theorem \ref{comp} implies that either statement $(2)$ or $(3)$ holds.
\end{proof}

We shall deal next with non-commuting pairs in $\mathbb{M}_2$. Let $0\leq a, b  \leq 1$ be a non-commuting, absolutely compatible pair in $\mathbb{M}_2$. In this case $ a b \neq 0$. The case in which $a$ or $b$ is not strict is treated in Proposition \ref{non-strict}. We can thus assume, without any loss of generality, that $0\lneqq a, b  \lneqq 1$ are strict, absolutely compatible, and $ ab \neq0$. Henceforth, given $a\in \mathbb{M}_n,$ the symbol $\trace(a)$ will denote the (non-normalized) trace of $a$.

\begin{theorem}\label{strict} Let $0\lneqq a,b\lneqq 1$ in $\mathbb{M}_2$. Then $a$ and $b$ are strict and absolutely compatible with $ a b \neq b a$ if, and only if, the following three properties hold:
\begin{enumerate}[$(1)$]
\item $\det (a)>0,\; \det(b)>0$;
\item $\trace(a) =1= \trace(b)$;
\item $\det(a\circ b) =0$.
\end{enumerate}
\end{theorem}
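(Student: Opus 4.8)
The plan is to split the equivalence into two implications handled by complementary tools: for the forward direction I exploit that the range projection in Theorem \ref{characterization in vN} must be minimal in $\mathbb{M}_2$, and for the converse I use that trace-one positive $2\times 2$ matrices admit a Bloch (Pauli) parametrization in which the Jordan product linearizes.

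For the forward implication, suppose $a,b$ are strict and absolutely compatible. Strictness means both eigenvalues of $a$ (and of $b$) lie in $(0,1)$; since the determinant is the product of the eigenvalues, this already yields $\det(a)>0$ and $\det(b)>0$, i.e. property $(1)$. By Proposition \ref{strict}$(iii)$ strictness forces $ab\neq ba$; consequently neither of the degenerate Cases $1$ and $2$ in the proof of Theorem \ref{characterization in vN} (which would make $a,b$ commute) can occur, so $p_1=r(a\circ b)$ is neither $0$ nor $1$, hence minimal in $\mathbb{M}_2$. Writing $a,b$ in an orthonormal basis adapted to $\{p_1,1-p_1\}$, the off-diagonal entry $a_{12}$ is a nonzero scalar (if $a_{12}=0$ then $(b.2)$ forces $b_{12}=0$ too, so $a,b$ are simultaneously diagonal and commute), and identity $(b.5)$ reads $a_{12}=(a_{11}+a_{22})\,a_{12}=(b_{11}+b_{22})\,a_{12}$; dividing by $a_{12}$ gives $\trace(a)=a_{11}+a_{22}=1=b_{11}+b_{22}=\trace(b)$, which is property $(2)$. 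Finally, since $p_1$ is rank one and $r(a\circ b)=p_1$, the positive matrix $a\circ b$ has rank one, so $\det(a\circ b)=0$, giving property $(3)$.

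For the converse, assume $(1)$, $(2)$, $(3)$. Properties $(1)$ and $(2)$ first guarantee strictness: $\det(a)>0$ forces both eigenvalues of $a$ to be strictly positive, and together with $\trace(a)=1$ and the constraint $a\leq 1$ no eigenvalue can equal $1$, so both lie in $(0,1)$ (and likewise for $b$). Using $\trace(a)=\trace(b)=1$ we write $a=\tfrac12(I+\vec x\cdot\vec\sigma)$ and $b=\tfrac12(I+\vec y\cdot\vec\sigma)$ with $\vec x,\vec y\in\mathbb{R}^3$, $|\vec x|,|\vec y|<1$, where $\vec\sigma$ denotes the Pauli matrices. A direct computation using $(\vec x\cdot\vec\sigma)(\vec y\cdot\vec\sigma)=(\vec x\cdot\vec y)I+i(\vec x\times\vec y)\cdot\vec\sigma$ gives
$$a\circ b=\tfrac14\big((1+\vec x\cdot\vec y)I+(\vec x+\vec y)\cdot\vec\sigma\big),\qquad (1-a)\circ(1-b)=\tfrac14\big((1+\vec x\cdot\vec y)I-(\vec x+\vec y)\cdot\vec\sigma\big),$$
so that the two Jordan products share the same trace $\tfrac12(1+\vec x\cdot\vec y)$ and the same determinant $\det(a\circ b)$, and their product collapses to the scalar matrix $(a\circ b)\big((1-a)\circ(1-b)\big)=\det(a\circ b)\,I$.

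The crux of the converse — and the step I expect to be the main obstacle — is upgrading the numerical identity $\det(a\circ b)=0$ to the genuine order-theoretic hypotheses of Proposition \ref{p 00}$(c)$, namely the positivity of $a\circ b$ and of $(1-a)\circ(1-b)$. Here I would invoke the Cauchy–Schwarz inequality together with strictness to get $\vec x\cdot\vec y\geq -|\vec x|\,|\vec y|>-1$, whence $\trace(a\circ b)=\tfrac12(1+\vec x\cdot\vec y)>0$; a $2\times 2$ Hermitian matrix with vanishing determinant and strictly positive trace has eigenvalues $0$ and $\trace>0$, hence is positive semidefinite. The same reasoning applied to $(1-a)\circ(1-b)$ (whose Bloch vectors are $-\vec x,-\vec y$, leaving trace and determinant unchanged) yields $(1-a)\circ(1-b)\geq0$. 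Since $(3)$ makes the product $\det(a\circ b)\,I$ vanish, Proposition \ref{p 00}$(c)$ gives $a\triangle b$, and Proposition \ref{strict}$(iii)$ then forces $ab\neq ba$, completing the characterization.
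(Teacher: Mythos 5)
Your proof is correct, and its converse half follows a genuinely different route from the paper's. The forward implication is essentially the published argument: both proofs invoke Theorem \ref{characterization in vN}, use $ab\neq ba$ to rule out $p_1=r(a\circ b)\in\{0,1\}$ so that all blocks are scalars, observe $a_{12}\neq 0$ (otherwise $a$ and $b$ would be simultaneously diagonal), and read $\trace(a)=1=\trace(b)$ off identity $(b.5)$; for property $(3)$ the paper computes $a\circ b$ explicitly via $(b.4)$ where you simply note that $a\circ b$ has rank one, a cosmetic difference. For the converse, the paper re-enters its own machinery: it writes $a\circ b=\lambda p$ with $p$ a rank one projection, conjugates $p$ to $\mathrm{diag}(1,0)$ by a unitary, uses the trace hypothesis to write $a_u,b_u$ as trace-one matrices, derives $\beta=-\alpha$ and $|\alpha|^2=(1-t)(1-s)$ from the vanishing entries of $a_u\circ b_u$, and then verifies conditions $(b.1)$--$(b.5)$ of Theorem \ref{characterization in vN} to get $a\triangle b$; non-commutativity follows since $ab=ba$ would give $0=\det(a\circ b)=\det(a)\det(b)>0$. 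You instead verify Proposition \ref{p 00}$(c)$ directly in the Bloch parametrization, the key point being the identity $(a\circ b)\big((1-a)\circ(1-b)\big)=\det(a\circ b)\,1$, valid for any two trace-one hermitian $2\times 2$ matrices, with positivity of the two factors obtained from the trace/determinant eigenvalue count together with Cauchy--Schwarz (which needs strictness, and you correctly establish strictness from $(1)$ and $(2)$ before using it). Your route is more self-contained: the converse needs neither Theorem \ref{characterization in vN} nor Theorem \ref{comp1}, only the quoted Proposition \ref{p 00}, and it makes transparent why the single scalar equation $\det(a\circ b)=0$ encodes absolute compatibility for strict trace-one pairs. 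The paper's route, by contrast, produces the explicit unitary normal form ($\beta=-\alpha$ and $|\alpha|^2=(1-t)(1-s)$) which is recycled in Theorems \ref{ellipticity} and \ref{strict2n}, and its technique is not tied to the Pauli algebra of $\mathbb{M}_2$. Your closing appeal to Proposition \ref{strict}$(iii)$ for $ab\neq ba$ is legitimate at that stage, since both elements are by then known to be strict and absolutely compatible; the paper's determinant one-liner is an equivalent alternative.
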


\begin{proof} By hypothesis $n(a) = n(b) = s(a) = s(b) =0$. Accordingly to the notation and conclusions in Theorem \ref{comp1}, we also know that $s(b_2) = 0 = n(b_2)$. Therefore, by the just quoted theorem (see also Remark \ref{r final vN}), we can assume, up to an appropriate representation, that $a= \left(
                                                                                                                          \begin{array}{cc}
                                                                                                                            \alpha_{11} & \alpha_{12} \\
                                                                                                                            \overline{\alpha_{12}} & \alpha_{22} \\
                                                                                                                          \end{array}
                                                                                                                        \right),$ and $b= \left(
                                                                                                                          \begin{array}{cc}
                                                                                                                            \beta_{11} & -\alpha_{12} \\
                                                                                                                            -\overline{\alpha_{12}} & \beta_{22} \\
                                                                                                                          \end{array}
                                                                                                                        \right),$ where $\alpha_{11}, \alpha_{22}\in \mathbb{R}_0^+,$ $\alpha_{12}\in \mathbb{C}$, and $\det (a), \det(b)>0$
(the latter because $n(a) = n(b) =0$). We also know from Theorem \ref{characterization in vN} that the following identities hold \begin{enumerate}[$(1)$]		
		\item $|\overline{\alpha_{12}}|^2
		= (1 - \alpha_{11}) (1 - \beta_{11})$;
		\item $|\alpha_{12}|^2 = \alpha_{22} \beta_{22}$;
		\item $\alpha_{12} = (\alpha_{11} + \alpha_{22} )\alpha_{12} = (\beta_{11} +
		 \beta_{22}) \alpha_{12}$.
	\end{enumerate}
The case $\alpha_{12} = 0$ is impossible because $a$ and $b$ do not commute. Therefore $ \alpha_{11} + \alpha_{22} = \beta_{11} +
		 \beta_{22} =1$. It is not hard to check that, in this case, $$a\circ b = \left(
                                                                             \begin{array}{cc}
                                                                               \alpha_{11} \beta_{11} - |\alpha_{12}| & 0 \\
                                                                               0 & \alpha_{22} \beta_{22} - |\alpha_{12}| \\
                                                                             \end{array}
                                                                           \right) 
= \left(
                                                                             \begin{array}{cc}
                                                                               \alpha_{11} \beta_{11} - |\alpha_{12}| & 0 \\
                                                                               0 & 0 \\
                                                                             \end{array}
                                                                           \right),$$ and thus $\det (a\circ b ) =0$.\smallskip

Now let us assume that $a$ and $b$ satisfy $(1)-(3)$. Since $a\circ b$ is a hermitian matrix with $\det (a \circ b) = 0$, then there exists a rank one projection $p \in \mathbb{M}_2$ such that $a\circ b = \lambda p$ for some $\lambda \in [0,1]$. Find a unitary $u$ such that $u^* p u = \left(
                                                                                                                                         \begin{array}{cc}
                                                                                                                                           1&0\\0&0
                                                                                                                                         \end{array}
                                                                                                                                       \right)
$. As $\trace(a) = 1 = \trace(b)$, we have $a_u = u^* a u =  \left(
                                                               \begin{array}{cc}
                                                                 t & \alpha\\\overline{\alpha}&1-t
                                                               \end{array}
                                                             \right)
$ and $b_u = u^* b u = \left(
                                                        \begin{array}{cc}
                                                          s&\beta\\
\overline{\beta}&1-s
                                                        \end{array}
                                                      \right)
$. Having in mind that $a_u \circ b_u=\left(
                        \begin{array}{cc}
                          \lambda&0\\0&0
                        \end{array}
                      \right)
,$ we get $\alpha + \beta = 0$ and $\alpha \overline{\beta} +\overline{\alpha}\beta +2 (1-t)(1-s) = 0$. Thus $\beta = -\alpha$ and $|\alpha|^2 =(1-t)(1-s)$. Theorem \ref{characterization in vN} implies that $a_u \triangle b_u$ and consequently $a \triangle b$.\smallskip
	
We shall next show that $a$ is strict. Clearly $n(a) =0$ because $\det(a)>0$. If $s(a) \neq 0$, then there exists a rank one projection $q \in \mathbb{M}_2$ such that $q \leq a$. In this case $a = q + t (1-q)$ for some $t\in [0,1]$. The condition $\trace (a) =1$ implies that $t=0$ and hence $\det(a) =0$, which is impossible. We can similarly prove that $b$ is strict.\smallskip

Finally, if $ a b  = b a $, then $a\circ b = a b = ba $ and $0 = \det (a\circ b) = \det( ab) = \det(a) \det(b) >0$, which is impossible. We have therefore shown that $a b\neq ba$, and in particular $ ab \neq 0$. 
\end{proof}

Let us comment some more concrete conclusions and geometric interpretations.

\begin{remark}{\rm $0\lneqq a,b\lneqq 1$ in $\mathbb{M}_2$ with $a\triangle b$ and $ a b \neq b a$. By Theorem \ref{strict}, there exists $t\in [0,1]$ and $\alpha\in \mathbb{C}$, such that $\det(a) = \det \left(
                                                               \begin{array}{cc}
                                                                 t & \alpha\\\overline{\alpha}&1-t
                                                               \end{array}
                                                             \right) = t(1-t)-|\alpha|^2\leq \frac{1}{4}-|\alpha|^2 \leq \frac14$. Similarly, $\det(b)\leq \frac14$.
Furthermore, $\det(a)=\frac{1}{4}$ if, and only if, $t=\frac{1}{2}$ and $\alpha=0$, or equivalently, $a=\frac{1}{2} 1$, which is impossible because $a b \neq b a$.  we have therefore shown that $\det(a), \det(b) <\frac{1}{4}$.\smallskip

We note that the matrices $a$ and $b$ belong to the set
\begin{align*}
\mathcal{S} &= \left\{ c \in \mathbb{M}_2 : 0\leq c\leq 1,\ \trace(c)=1,\  0<\det(c)<\frac{1}{4} \right\} \\
&= \left\{\left(
                                           \begin{array}{cc}
                                            t & \alpha \\
                                            \bar{\alpha} & 1 - t
                                           \end{array}
                                         \right) : t  \in (0, 1), \alpha \in \mathbb{C} ~\textrm{and} ~\vert \alpha\vert^2 < t (1 - t) \right\}\setminus \left\{\frac{1}{2} 1 \right\}.
\end{align*}
We observe that every element in $\mathcal{S}$ is strict.
}\end{remark}

Let $a$ be an element in $\mathcal{S}$. We can now conclude that, up to an appropriate $^*$-isomorphism, we can determine the set of absolutely compatible elements in $\mathcal{S}$.

\begin{theorem}\label{ellipticity}
Let $a$ and $b$ be two matrices in the set $\mathcal{S}\subset \mathbb{M}_2$. Then $a$ is absolutely compatible with $b$ if, and only if, there exists a unitary $u\in \mathbb{M}_2$ such that $u^* a u = \left(\begin{array}{cc}
t & \alpha \\
\bar{\alpha} & 1 - t
\end{array}
\right)$ and $u^* b u =\left(\begin{array}{cc}
 s & \beta \\
  \bar{\beta} & 1 - s
  \end{array}\right),$ where $\beta= -\alpha\neq 0$ and $s = \frac{|\alpha|^2}{t}$ or $s =1- \frac{|\alpha|^2}{1-t}$ with $s,t\in (0,1)$ and $|\alpha|^2 < t (1-t)$.
\end{theorem}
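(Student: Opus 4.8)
The plan is to reduce the statement to Theorem \ref{strict}, which already characterizes strict, absolutely compatible, non-commuting pairs in $\mathbb{M}_2$, and then to handle the residual commuting case separately. First I would note that every element of $\mathcal{S}$ is strict with $\trace = 1$ and determinant strictly between $0$ and $\frac14$, so the hypotheses $\det(a),\det(b)>0$, $\trace(a)=1=\trace(b)$ of Theorem \ref{strict} are automatic. Hence for the \emph{non-commuting} case, $a\triangle b$ is equivalent (by Theorem \ref{strict}) to $\det(a\circ b)=0$. Running the same diagonalization as in the proof of Theorem \ref{strict}, I would pick the unitary $u$ that diagonalizes the rank-one matrix $a\circ b$, so that $u^*(a\circ b)u = \left(\begin{smallmatrix}\lambda&0\\0&0\end{smallmatrix}\right)$; writing $u^*au=\left(\begin{smallmatrix}t&\alpha\\\bar\alpha&1-t\end{smallmatrix}\right)$ and $u^*bu=\left(\begin{smallmatrix}s&\beta\\\bar\beta&1-s\end{smallmatrix}\right)$ (the diagonal entries being forced by $\trace=1$), the vanishing of the off-diagonal and the $(2,2)$-entry of $u^*(a\circ b)u$ yields exactly $\beta=-\alpha$ and $\alpha\bar\beta+\bar\alpha\beta+2(1-t)(1-s)=0$, i.e. $|\alpha|^2=(1-t)(1-s)$.

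The key remaining computation is to convert the constraint $|\alpha|^2=(1-t)(1-s)$ into the explicit formula for $s$ in the statement. Here I would use the two identities $(1)$ and $(2)$ from the matricial description (Theorem \ref{characterization in vN}) in the diagonalized coordinates: identity $(2)$ reads $|\alpha|^2=\alpha_{22}\beta_{22}=(1-t)(1-s)$, and identity $(1)$ reads $|\alpha|^2=(1-t)(1-s)$ as well, but using the \emph{other} pairing coming from $(b.3)$ one obtains $|\alpha|^2=ts$ (this is the $(1,1)$ relation $|a_{12}^*|^2=(p_1-a_{11})(p_1-b_{11})$, which after identifying the roles of the two projection blocks gives $ts$). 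The two relations $|\alpha|^2=ts$ and $|\alpha|^2=(1-t)(1-s)$ are precisely the two branches: solving $|\alpha|^2=ts$ gives $s=\frac{|\alpha|^2}{t}$, while solving $|\alpha|^2=(1-t)(1-s)$ gives $s=1-\frac{|\alpha|^2}{1-t}$. Since $a\circ b$ is rank one, exactly one of its two diagonal entries vanishes, and the choice of which entry vanishes corresponds to which of the two formulas for $s$ holds. The condition $\beta=-\alpha\neq0$ reflects non-commutativity (if $\alpha=0$ the matrices would be diagonal and commute), and $|\alpha|^2<t(1-t)$ is just the requirement $\det(a)>0$ carried over from membership in $\mathcal{S}$.

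For the \emph{converse} direction, given $u$, $t$, $\alpha$, and $s$ of one of the two stated forms, I would simply verify that $u^*au\circ u^*bu$ is diagonal with a zero diagonal entry, so $\det(a\circ b)=0$, and then invoke Theorem \ref{strict} (in the non-commuting case) or a direct application of Theorem \ref{characterization in vN} to conclude $u^*au\triangle u^*bu$, hence $a\triangle b$ since absolute compatibility is preserved by the $^*$-isomorphism $x\mapsto uxu^*$. One must also separately dispose of the possibility that $a$ and $b$ \emph{commute}: a quick check shows that two distinct elements of $\mathcal{S}$ that are simultaneously diagonalizable and absolutely compatible would force one of them to have a zero eigenvalue (hence determinant zero), contradicting membership in $\mathcal{S}$; so within $\mathcal{S}$ the pair $a\triangle b$ (with $a\neq b$ or even $a=b$ suitably handled) is automatically non-commuting, and the formulas above cover every case.

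The main obstacle I anticipate is the bookkeeping in matching the two algebraic branches $s=\frac{|\alpha|^2}{t}$ and $s=1-\frac{|\alpha|^2}{1-t}$ to the two possible zero diagonal entries of the rank-one matrix $a\circ b$, and confirming that these are mutually exclusive and exhaustive (equivalently, that $ts=(1-t)(1-s)$ forces $|\alpha|^2$ to be the common value and cannot happen for generic strict $a,b$ unless one is in a degenerate commuting situation). Care is needed to ensure $s\in(0,1)$ and $|\alpha|^2<t(1-t)$ remain consistent in each branch; this is where the strictness hypothesis $0<\det<\frac14$ does the real work.
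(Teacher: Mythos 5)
Your proposal is correct and takes essentially the same route as the paper: both arguments reduce everything to Theorem \ref{strict} (commutativity being impossible for a strict absolutely compatible pair), both choose the unitary $u$ that diagonalizes the rank-one positive matrix $a\circ b$ (which is exactly the basis adapted to $r(a\circ b)$ that the paper obtains from Theorem \ref{characterization in vN}), both read off $\beta=-\alpha$ and the branch equations $ts=|\alpha|^2$ or $(1-t)(1-s)=|\alpha|^2$ from the vanishing entries of $u^*(a\circ b)u$, and both verify the converse through the three criteria of Theorem \ref{strict}. One small inaccuracy, which does not affect the argument: in coordinates where $r(a\circ b)$ sits in the $(1,1)$ corner, conditions $(b.3)$ and $(b.4)$ of Theorem \ref{characterization in vN} reduce to the \emph{same} scalar identity $(1-t)(1-s)=|\alpha|^2$, and the other branch $ts=|\alpha|^2$ arises only from the opposite ordering of the diagonalization (not from a ``different pairing'' of those identities in fixed coordinates); since the theorem only asserts that $s$ equals one of the two expressions, the single branch your vanishing-entry computation produces already completes the forward implication.
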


\begin{proof} We begin with some observations. The elements $a$ and $b$ are strict because they both lie in $\mathcal{S}$. Actually, $\det (a)>0,\; \det(b)>0$ and $\trace(a) =1= \trace(b)$.\smallskip

Suppose $a\triangle b$. Having in mind that for each unitary $u\in \mathbb{M}_2$ we have $u^* \mathcal{S} u = \mathcal{S}$, by applying Theorem \ref{characterization in vN} or Theorem \ref{comp1}, we can find a unitary element $u\in \mathbb{M}_2$ such that $a_u=u^* a u = \left(\begin{array}{cc}
t & \alpha \\
\bar{\alpha} & 1 - t
\end{array}
\right)\in \mathcal{S}$ and $b_u=u^* b u = \left(\begin{array}{cc}
s & -\alpha \\
-\bar{\alpha} & 1 - s
\end{array}
\right)\in \mathcal{S},$ with $|\alpha|^2 < \min\{ t (1-t) , s (1-s)\}$.\smallskip

In this case, $a_u b_u = b_u a_u$ if, and only if, $\alpha =0$ or $s +t =1$. The second case is impossible because if $s+t =1$ we would have $b_u = 1-a_u$ and thus $$ | b_u - a_u | + |1-b_u - a_u| = | b_u - a_u | = | 1 - 2 a_u| = \lambda_2 q_1 + |\lambda_1| q_2 \neq 1,$$ where $q_1$ is a minimal projection, $q_2 = 1-q_1$, and $$1>\lambda_2 = \sqrt{1 - 4 (t(1-t) -|\alpha|^2)}>0>\lambda_1 =  - \lambda_2 >-1$$ are the eigenvalues of $1 - 2 a_u$. The case $\alpha =0$ also is impossible because $s,t\in (0,1)\backslash\{\frac12\}$ and $a b= b a $.\smallskip

We have deduced that $a b \neq ba$ (equivalently, $a_u b_u = b_u a_u$). Theorem \ref{strict} implies that $a\triangle b$ (equivalently, $a_u\triangle b_u$) if, and only if, $\det (a\circ b) =0$ (equivalently, $\det (a_u\circ b_u) =0$). It can be easily seen that $\det (a_u\circ b_u) = (s t -|\alpha|^2)  ((1-s)(1-t) -|\alpha|^2)=0$, and thus $s t  = |\alpha|^2$ or $(1-s)(1-t) =|\alpha|^2$.\smallskip

On the other hand, since $|\alpha|^2 < t(1-t)<\frac14$, it can be easily seen that $t>|\alpha|^2$ or $(1-t)>|\alpha|^2$. If $t>|\alpha|^2$ and $(1-t)\leq |\alpha|^2$, we deduce that $s = \frac{|\alpha|^2}{t}\in (0,1)$ and $b_u = \left(\begin{array}{cc}
\frac{|\alpha|^2}{t} & -\alpha \\
-\bar{\alpha} & 1 - \frac{|\alpha|^2}{x}
\end{array}
\right)$. If $t\leq |\alpha|^2$ and $(1-t)> |\alpha|^2$, we deduce that $s =1- \frac{|\alpha|^2}{1-t}\in (0,1)$ and $b_u = \left(\begin{array}{cc}
1- \frac{|\alpha|^2}{1-t} & -\alpha \\
-\bar{\alpha} &  \frac{|\alpha|^2}{1-t}
\end{array}
\right).$ If $t>|\alpha|^2$ and $(1-t)>|\alpha|^2$ both solutions are possible.\smallskip

Finally, if there exists a unitary $u\in \mathbb{M}_2$ such that $u^* a u = \left(\begin{array}{cc}
t & \alpha \\
\bar{\alpha} & 1 - t
\end{array}
\right)$ and $u^* b u =\left(\begin{array}{cc}
 s & \beta \\
  \bar{\beta} & 1 - s
  \end{array}\right),$ in the conditions of the theorem, it is easy to check that $a b\neq ba$, $\det (a)>0,\; \det(b)>0$,
  $\trace(a) =1= \trace(b)$, and $\det(a\circ b) =0$. Theorem \ref{strict} assures that $a\triangle b$. 	
\end{proof}

The set $\mathcal{S}$ can be identified with the punctured open ball in $\mathbb{R}^3$ given by
\begin{align*}
\stackrel{\circ}{\mathcal{B}} &=\left\{(t, \Re\hbox{e}(\alpha), \Im\hbox{m}(\alpha)): 0<\left(t-\frac12\right)^2+\Re\hbox{e}(\alpha)^2+\Im\hbox{m}(\alpha)^2<\frac{1}{4}\right\}\\
& =\hbox{int}(\mathcal{B})\left(\left(\frac{1}{2},0,0\right),\frac{1}{2}\right)\setminus \left\{\left(\frac{1}{2},0,0\right)\right\}.
\end{align*}

Fix $a= \left(
                                           \begin{array}{cc}
                                            t & \alpha \\
                                            \bar{\alpha} & 1 - t
                                           \end{array}
                                         \right) \in \mathcal{S}$
and let us consider the corresponding point $\widetilde{a}=(t,\Re\hbox{e}(\alpha ), \Im\hbox{m}(\alpha ))$ in $\stackrel{\circ}{\mathcal{B}}$. Then $a':=1-a\in \mathcal{S}$ corresponds to the point $\widetilde{a'}=(1-t,-\Re\hbox{e}(\alpha ), -\Im\hbox{m}(\alpha ))\in \stackrel{\circ}{\mathcal{B}}$.\smallskip

Let $\lambda_1\geq \lambda_2 \geq 0$ be the eigenvalues of $a$. Then $1> \lambda_1 > \lambda_2>0$ with $\lambda_1 + \lambda_2=1$. Consider $p_a=\frac{1}{\lambda_1-\lambda_2} \left(
  \begin{array}{cc}
t-\lambda_2 &\alpha\\
\bar{\alpha}&\lambda_1 -t
  \end{array}
\right)
$. It is not hard to check that $p_a$ is a rank one projection, and the corresponding  point $\widetilde{p_a} = \frac{1}{\lambda_1-\lambda_2} \left( t-\lambda_2, \Re\hbox{e}(\alpha), \Im\hbox{m}(\alpha)\right)\in \mathbb{R}^3$ lies on the  outer boundary of $\stackrel{\circ}{\mathcal{B}}$ (that is, $\|\widetilde{p_a}-(1/2,0,0)\|_2^2 = 1/4$ in $\mathbb{R}^3$). Further, for the minimal projection $1 - p_a=p_{a}'=\frac{1}{\lambda_1-\lambda_2} \left(\begin{array}{cc}
                                               \lambda_1 - t &-\alpha\\ -\bar{\alpha} & t- \lambda_2
                                                                                    \end{array}
                                                                                  \right)
$, its corresponding point $\widetilde{p_a'}= \frac{1}{\lambda_1-\lambda_2} \left(\lambda_1 - t, -\Re\hbox{e}(\alpha), -\Im\hbox{m}(\alpha)\right)$ also lies on the outer boundary of $\stackrel{\circ}{\mathcal{B}}$.\smallskip

\begin{remark}\label{r ellipsoid}{\rm Let $a$ and $b$ be two matrices in the set $\mathcal{S}\subset \mathbb{M}_2$. Suppose $a$ is absolutely compatible with $b$. Suppose first that there exists a unitary $u\in \mathbb{M}_2$ such that $a_u=u^* a u = \left(\begin{array}{cc}
t & \alpha \\
\bar{\alpha} & 1 - t
\end{array}
\right)$ and $b_u =u^* b u =\left(\begin{array}{cc}
 \frac{|\alpha|^2}{t} & -\alpha \\
  -\bar{\alpha} & 1 - \frac{|\alpha|^2}{t}
  \end{array}\right),$ whose coordinates in $\mathbb{R}^3$ are $\widetilde{a_u}=(t,\Re\hbox{e}(\alpha ), \Im\hbox{m}(\alpha ))$ and $\widetilde{b_u}=(\frac{|\alpha|^2}{t},-\Re\hbox{e}(\alpha ), -\Im\hbox{m}(\alpha ))$.\smallskip

Consider the spheroid  $$\mathcal{E}_{a_u}=\{x \in \mathbb{R}^3: d_2(x, \widetilde{a_u}) + d_2(x, \widetilde{a_u'})=1 \}.$$  It is not hard to check that $$\|\widetilde{b_u}-\widetilde{a_u}\|_2 + \| \widetilde{b_u} - \widetilde{a_u'} \|_2 = \sqrt{\left(t-\frac{|\alpha|^2}{t}\right)^2 + 4 \Re\hbox{e}(\alpha )^2 +4 \Im\hbox{m}(\alpha )^2 } + \left| 1-t - \frac{|\alpha|^2}{t} \right|$$
$$ = \sqrt{\left(t-\frac{|\alpha|^2}{t}\right)^2 + 4 |\alpha|^2 } + \left| 1-t - \frac{|\alpha|^2}{t} \right| = \sqrt{\left(t+\frac{|\alpha|^2}{t}\right)^2 } + \left| 1-t - \frac{|\alpha|^2}{t} \right|$$ $$=\hbox{(since $t(1-t)>|\alpha|^2 $)} =  t+\frac{|\alpha|^2}{t} +  1-t - \frac{|\alpha|^2}{t} =1,$$ that is $\widetilde{b_u}\in \mathcal{E}_{a_u}$.  Similarly, for $b_u  =\left(\begin{array}{cc}
 1- \frac{|\alpha|^2}{1-t} & -\alpha \\
  -\bar{\alpha} &  \frac{|\alpha|^2}{1-t}
  \end{array}\right),$ we can also show that $\widetilde{b_u}\in \mathcal{E}_{a_u}$.
}\end{remark}

The geometric interest is that the conclusion in the above remark actually is a pattern for absolutely compatible elements in $\mathbb{M}_2$. Let us observe that $\widetilde{p_a}, \widetilde{p_a'}\in \mathcal{E}_{a}\backslash \mathcal{S}.$

\begin{theorem}\label{ellipticity}
Let $a = \left(
            \begin{array}{cc}
              t & \alpha\\
              \bar{\alpha} & 1 - t
            \end{array},\right)$ and
$b = \left(
\begin{array}{cc}
x & y + i z \\
y - i z & 1 - x
\end{array} \right)$ be two elements in $\mathcal{S}$. Then $a$ is absolutely compatible with $b$ if, and only if, the corresponding point $\widetilde{b} = (x,y,z)$ in $\mathbb{R}^3$ lies in $\mathcal{E}_a\setminus\{\widetilde{p_a}, \widetilde{p_a'}\}$.
\end{theorem}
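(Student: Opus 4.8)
The plan is to reduce the geometric condition to the single scalar equation $\det(a\circ b)=0$, which is already known to characterise absolute compatibility in this setting, and then to recognise that equation as the defining equation of the spheroid $\mathcal{E}_a$. First I would settle commutativity: since $a\in\mathcal{S}$ is strict, any $b$ with $a\triangle b$ and $ab=ba$ would be a (minimal) projection by Proposition \ref{p AC pairs in M2}$(a)$, contradicting $0<\det(b)<\frac14$ for $b\in\mathcal{S}$. Hence every absolutely compatible pair inside $\mathcal{S}$ is automatically non-commuting. As $a,b\in\mathcal{S}$ already have $\det(a),\det(b)>0$ and $\trace(a)=\trace(b)=1$, the characterisation of non-commuting strict absolutely compatible pairs (Theorem \ref{strict}) collapses to
\[
a\triangle b\ \Longleftrightarrow\ \det(a\circ b)=0 .
\]
So it suffices to prove that, for $a,b\in\mathcal{S}$, one has $\det(a\circ b)=0$ if and only if $\widetilde b\in\mathcal{E}_a\setminus\{\widetilde{p_a},\widetilde{p_a'}\}$.

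The heart of the matter is a direct computation of $\det(a\circ b)$. Writing $\alpha=y_a+iz_a$ and introducing the centred coordinates $\vec A=\widetilde a-(\tfrac12,0,0)=(t-\tfrac12,y_a,z_a)$ and $\vec B=\widetilde b-(\tfrac12,0,0)=(x-\tfrac12,y,z)$, I would expand $a\circ b=\tfrac12(ab+ba)$ entrywise and simplify, using $\det(a)=\tfrac14-\|\vec A\|_2^2$ and $\det(b)=\tfrac14-\|\vec B\|_2^2$, to the clean identity
\[
\det(a\circ b)=(\vec A\cdot\vec B)^2+\tfrac{1}{16}-\tfrac14\bigl(\|\vec A\|_2^2+\|\vec B\|_2^2\bigr).
\]
The foci of $\mathcal{E}_a$ are $\widetilde a$ and $\widetilde{a'}=\widetilde{1-a}$, which in centred coordinates form the antipodal pair $\pm\vec A$; thus $d_2(\widetilde b,\widetilde a)=\|\vec B-\vec A\|_2$ and $d_2(\widetilde b,\widetilde{a'})=\|\vec B+\vec A\|_2$. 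Setting $D_1=\|\vec B-\vec A\|_2$ and $D_2=\|\vec B+\vec A\|_2$, the relations $D_1^2+D_2^2=2(\|\vec A\|_2^2+\|\vec B\|_2^2)$ and $D_1^2D_2^2=(\|\vec A\|_2^2+\|\vec B\|_2^2)^2-4(\vec A\cdot\vec B)^2$ give
\[
(D_1+D_2)^2=2\bigl(\|\vec A\|_2^2+\|\vec B\|_2^2\bigr)+2\sqrt{\bigl(\|\vec A\|_2^2+\|\vec B\|_2^2\bigr)^2-4(\vec A\cdot\vec B)^2}.
\]

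To close the equivalence I would observe that $\det(a),\det(b)>0$ force $\|\vec A\|_2^2,\|\vec B\|_2^2<\tfrac14$, hence $\|\vec A\|_2^2+\|\vec B\|_2^2<\tfrac12$; therefore $1-2(\|\vec A\|_2^2+\|\vec B\|_2^2)>0$, and isolating the square root and squaring (a reversible step precisely because this quantity is positive) shows that $D_1+D_2=1$ holds if and only if $(\vec A\cdot\vec B)^2=\tfrac14(\|\vec A\|_2^2+\|\vec B\|_2^2)-\tfrac1{16}$, i.e.\ exactly when $\det(a\circ b)=0$. Finally, the excluded points $\widetilde{p_a},\widetilde{p_a'}$ lie on the outer boundary sphere $\|\,\cdot-(\tfrac12,0,0)\|_2=\tfrac12$, whereas $\widetilde b$ lies in the open ball $\stackrel{\circ}{\mathcal B}$ because $b\in\mathcal{S}$; so $\widetilde b\neq\widetilde{p_a},\widetilde{p_a'}$ automatically and deleting these two points changes nothing.

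The step I expect to be the main obstacle is precisely the reversibility of the squaring: the equation $D_1+D_2=1$ is equivalent to $\det(a\circ b)=0$ only once one knows $1-2(\|\vec A\|_2^2+\|\vec B\|_2^2)\ge 0$, and this is exactly where the strictness hypotheses $\det(a),\det(b)>0$ built into membership in $\mathcal{S}$ are indispensable; without them the squared equation would also be satisfied on the wrong branch. As a cross-check on the forward implication, one may note more conceptually that the coordinate map $c\mapsto\widetilde c$ intertwines unitary conjugation on $\mathcal{S}$ with rotations about $(\tfrac12,0,0)$, so that the membership $\widetilde{b_u}\in\mathcal{E}_{a_u}$ already verified in the canonical coordinates of Remark \ref{r ellipsoid} transports rigidly to the general pair; but the direct computation above handles both implications uniformly.
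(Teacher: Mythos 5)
Your proposal is correct, and its skeleton coincides with the paper's: both first reduce, via Theorem \ref{strict} (after excluding commutativity, which forces $b$ to be a projection and hence off $\mathcal{S}$), to the scalar condition $\det(a\circ b)=0$, and then identify that condition with the focal equation of $\mathcal{E}_a$. Where you differ is in how that identification is carried out. The paper expands everything in raw coordinates: it writes $\det(a\circ b)=0$ as an explicit quartic-free polynomial equation in $(x,y,z)$, then manipulates $d_2(\widetilde b,\widetilde a)+d_2(\widetilde b,\widetilde{a'})=1$ by isolating radicals and squaring twice until the same polynomial appears (and offers, as a second route, a rotation of coordinates exhibiting the prolate spheroid with foci $\widetilde a,\widetilde{a'}$ and vertices $\widetilde{p_a},\widetilde{p_a'}$). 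You instead pass to centred coordinates $\vec A,\vec B$ and prove the clean invariant identity $\det(a\circ b)=(\vec A\cdot\vec B)^2+\tfrac1{16}-\tfrac14\bigl(\|\vec A\|_2^2+\|\vec B\|_2^2\bigr)$ (which I checked: it is exactly right), then use the symmetric functions $D_1^2+D_2^2$ and $D_1^2D_2^2$ of the focal distances to convert $D_1+D_2=1$ into the same scalar equation. Your route buys two things. First, the algebra is structurally transparent rather than a coordinate slog. Second, and more substantively, you make explicit the one genuinely delicate point: the squaring steps are equivalences only because $\det(a),\det(b)>0$ force $\|\vec A\|_2^2+\|\vec B\|_2^2<\tfrac12$, so the isolated square root is equated to a \emph{positive} quantity. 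The paper's chain of ``squaring and simplifying'' silently assumes the analogous sign condition (positivity of $t+x-2tx-2\Re\hbox{e}(\alpha)y-2\Im\hbox{m}(\alpha)z$), so your write-up is actually tighter than the original on exactly the step you flagged as the main obstacle. Your handling of the excluded points $\widetilde{p_a},\widetilde{p_a'}$ (they sit on the boundary sphere, while $\widetilde b$ is interior) is also a cleaner disposal than the paper's implicit treatment.
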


\begin{proof} Let $|\alpha|= k$ and $\alpha= k e^{i\theta}$ for some $\theta \in[0, 2\pi]$. Assume that $b$ is absolutely compatible with $b$. Then by Theorem \ref{strict}, $\det(a \circ b)=0$. Let $\beta = y + i z$. To simplify notation, we set $\varrho = \Re\hbox{e}(e) (\alpha \overline{\beta}) = k (y\cos\theta +z \sin\theta ) $ By matrix calculations, we have  $\det(a \circ b) =0$ if, and only if, $$y^2 + z^2 +k^2 + 2 \varrho = | y + i z + k e^{i \theta}|^2 = 4 \left(t x + \varrho \right) \left((1-t)(1-x)+ \varrho \right)$$  which is equivalent to
\begin{align}\label{eq det a circ b}
& 4 t (1 - t) x^2 +  y^2 + z^2 -4 \varrho^2  +4 (1-2t )x \varrho \\ & - 4t (1 - t) x - 2(1-2t)\varrho +k^2= 0. \nonumber
\end{align}

On the other hand, the equation $$ \| \widetilde{a} -\widetilde{b}\|_2 + \| \widetilde{a'} -\widetilde{b}\|_2 = 1$$ can be rewritten in the form
\begin{align*}  \sqrt{(1-t-x)^2 + (-\Re\hbox{e}(\alpha)-y)^2 + (-\Im\hbox{m}(\alpha) - z)^2}  &\\
 = 1-\sqrt{(t-x)^2 + (\Re\hbox{e}(\alpha)-y)^2 + (\Im\hbox{m}(\alpha) - z)^2}. &
\end{align*} By squaring both sides and simplifying we get
$$ \sqrt{(t-x)^2 + (\Re\hbox{e}(\alpha)-y)^2 + (\Im\hbox{m}(\alpha) - z)^2} = t+x-2 t x - 2 \Re\hbox{e}(\alpha) y -2 \Im\hbox{m}(\alpha) z,$$ and by squaring one more time and simplifying we precisely arrive to \eqref{eq det a circ b}.\smallskip

We have proved that $a\triangle b$ if, and only if, $\det(a \circ b) =0$ if, and only if, \eqref{eq det a circ b} holds, if and only if, $\widetilde{b}\in \mathcal{E}_a\setminus\{p_a, p_a'\}.$\smallskip

An alternative approach can be obtained by substituting in \eqref{eq det a circ b} $Y=y\cos\theta + z\sin\theta = \frac{1}{k} \varrho$ and $Z=-y\sin\theta + z\cos\theta$, the previous equation transforms into
	$$4 t (1 - t) x^2+  Y^2 + Z^2 -4 k^2 Y^2 +4k(1-2t ) x Y - 4t (1 - t) x- 2k (1-2t) Y + k^2= 0,$$ which in turn reduces to
\begin{equation}\label{ddagger}  4d \left(U-\frac{1}{2} \cos\phi \right)^2 + \left(V- \frac{1}{2} \sin\phi \right)^2 +Z^2=d.
\end{equation}
Here $U= x \cos\phi -Y \sin\phi$, $V= x\sin\phi + Y \cos\phi$ with $\cos\phi=\frac{1-2 t}{\sqrt{1-4d}}$, $\sin\phi = \frac{2k}{\sqrt{1-4d}}$ and $d=\det(A)$. Hence \eqref{ddagger} and therefore \eqref{eq det a circ b} represent a prolate spheroid with semi-major axis $\frac{1}{2}$, eccentricity $\sqrt{1-4d}$. Further, we note that with respect to the co-ordinate system $(U,V,Z)$, \eqref{ddagger} has centre at $\left(\frac{1}{2} \cos\phi, \frac{1}{2} \sin\phi, 0\right)$; foci at $$F_1\left( \frac{1}{2}\left(\cos\phi - \sqrt{1-4d}, \frac{1}{2}\sin\phi, 0\right)\right)$$ and $$F_2\left( \frac{1}{2}\left(\cos\phi + \sqrt{1-4d}, \frac{1}{2}\sin\phi, 0\right)\right);$$ and the extremities of the major axis at $E_1\left( \frac{1}{2}\left(\cos\phi - 1, \frac{1}{2}\sin\phi, 0\right)\right)$ and $E_2\left( \frac{1}{2}\left(\cos\phi + 1, \frac{1}{2}\sin\phi, 0\right)\right)$.\smallskip
	
Transforming back to the coordinate system $(x,y,z)$, we may deduce that \eqref{eq det a circ b} has the centre at $\left(\frac{1}{2},0,0\right)$; foci at $\widetilde{a}$ and $\widetilde{a'}$ and the extremities of the major axis at $\widetilde{p_a}$ and $\widetilde{p_a'}$. 	
\end{proof}

\begin{remark}{\rm
	The director sphere of the prolate spheroid $\mathcal{E}_a$ (given by \eqref{eq det a circ b} in the proof of Theorem \ref{ellipticity}) is given by
	\begin{equation}\label{e20}
	x^2 + y^2 + z^2 = x
	\end{equation}
	which can be identified with $\mathcal{P}_1({\mathbb M_2})$ (:= the set of all rank one projection in $\mathbb M_2$) extending the identification between $\stackrel{\circ}{\mathcal{B}}$ and $\mathcal{S}$. Note that sphere given by \eqref{e20} is precisely the boundary of $\stackrel{\circ}{\mathcal{B}}$. Thus $\mathcal{P}_1({\mathbb M_2})$ may be called the (outer) boundary of $\mathcal{S}$. Similarly, the centre of (\ref{e20}) is $\left(\frac{1}{2}, 0, 0\right)$ which is identified with $\frac{1}{2} 1$. 
}\end{remark}

\subsection{Absolute compatibility in $\mathbb{M}_n$} Our next goal is to study absolutely compatible pairs in $\mathbb{M}_n$.

\begin{theorem}\label{strict2n}
Let $0\leq a, b\leq 1$ be strict elements in $\mathbb{M}_n$. If $a$ is absolutely compatible with $b$, then $n$ is an even number, say $n = 2 k,$ and there exist $a_1, \dots a_k$; $b_1, \dots b_k \in \mathcal{S}\subseteq \mathbb{M}_2$ and a unitary $w \in \mathbb{M}_n$ with $a_i \triangle b_i$ for every $i = 1, \dots k$ such that $a = w^* (a_1 \oplus \dots \oplus a_k) w$ and $b = w^* (b_1 \oplus \dots \oplus b_k) w$.
\end{theorem}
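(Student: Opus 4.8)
The plan is to reduce the $n\times n$ situation to the $2\times 2$ case already handled in Theorem \ref{strict} by peeling off invariant $2$-dimensional subspaces one at a time, and to argue inductively on $n$. Since $a$ and $b$ are strict, we have $s(a)=n(a)=0=s(b)=n(b)$, so $r(a)=r(b)=1$ and neither element has a projection summand. Applying Theorem \ref{comp} (or the sharper Theorem \ref{comp1} and Remark \ref{r final vN}$(2)$) to the absolutely compatible pair $(a,b)$, the non-commuting part must organize itself into a $2\times 2$ block: concretely, I expect to produce a rank-one projection $p$ in $\mathbb{M}_n$ such that the compressions of $a$ and $b$ to a $2$-dimensional subspace $\mathrm{range}(p)\oplus\mathrm{range}(q)$ (for a suitable second rank-one projection $q$) yield a pair $a_1,b_1\in\mathcal{S}\subseteq\mathbb{M}_2$ with $a_1\triangle b_1$, while the orthogonal complement splits off as a direct summand on which $a$ and $b$ remain strict and absolutely compatible.

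The key mechanism is the range projection $p_1=r(a\circ b)$ supplied by Theorem \ref{characterization in vN}. Writing $a$ and $b$ as $2\times 2$ operator matrices against $\{p_1,1-p_1\}$, the identities $(b.1)$--$(b.5)$ of that theorem give $a_{12}+b_{12}=0$, together with $|a_{12}|^2=a_{22}b_{22}=b_{22}a_{22}$ and $|a_{12}^{\ast}|^2=(p_1-a_{11})(p_1-b_{11})$, and the commutation relations $a_{11}b_{11}=b_{11}a_{11}$, $a_{22}b_{22}=b_{22}a_{22}$. On the diagonal blocks $a$ and $b$ therefore commute, and by Theorem \ref{comm} each diagonal block decomposes further; strictness forces those commuting pieces to be rank-one spectral components that pair up into $2\times 2$ blocks of the form in $\mathcal{S}$. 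The off-diagonal part $a_{12}=-b_{12}$, which is nonzero precisely where $a$ and $b$ fail to commute, is what glues a single eigendirection of the $a_{11}$-side to a single eigendirection of the $a_{22}$-side, producing one $2\times 2$ summand at a time. Simultaneously diagonalizing the commuting families on each diagonal block by a unitary, I would arrange the matching eigenvectors (those linked by a nonzero entry of $a_{12}$) into consecutive coordinate pairs; this yields the unitary $w$ and the block-diagonal form $a=w^*(a_1\oplus\dots\oplus a_k)w$, $b=w^*(b_1\oplus\dots\oplus b_k)w$.

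To make the induction precise I would isolate one nonzero ``coupling'' between an eigenvector $\xi$ of the $p_1$-block and an eigenvector $\eta$ of the $(1-p_1)$-block, check via the Theorem \ref{characterization in vN} identities that the $2$-dimensional space $\mathrm{span}\{\xi,\eta\}$ is invariant under both $a$ and $b$, and that the compressions lie in $\mathcal{S}$ with $\trace=1$ and $\det(a_i\circ b_i)=0$ (so Theorem \ref{strict} certifies $a_i\triangle b_i$). Passing to the orthogonal complement $\mathrm{span}\{\xi,\eta\}^{\perp}$, Lemma \ref{l absolute compatibility in hereditary subalgebras}$(b)$ guarantees that the restricted pair is again absolutely compatible, and strictness is inherited, so the inductive hypothesis applies to a lower-dimensional matrix algebra. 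Accumulating the $2\times 2$ blocks exhausts all of $\mathbb{C}^n$, which forces $n$ to be even; the residual space, if any, would carry a commuting strict pair, but Proposition \ref{strict}$(iii)$ shows a strict element cannot be absolutely compatible with another strict element \emph{commutingly}, so no leftover $1$-dimensional (commuting) pieces survive except by pairing—and a single unpaired strict eigendirection is impossible because its compression would be a scalar in $(0,1)$ forced to be a projection by Proposition \ref{strict}$(ii)$, contradicting strictness.

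The main obstacle I anticipate is the bookkeeping needed to show the couplings organize into disjoint $2\times 2$ blocks rather than a more tangled pattern—a priori a single eigenvector $\xi$ of the top block could be coupled by $a_{12}$ to several eigenvectors of the bottom block. The crucial point that resolves this is the rank-and-trace rigidity forced by strictness together with $\det(a_i\circ b_i)=0$: the constraints $|a_{12}|^2=a_{22}b_{22}$ and $\trace=1$ on each emerging summand leave no room for a nonzero entry to spread across more than one eigendirection on the other side, so the coupling matrix, read in the joint eigenbasis of the (commuting) diagonal blocks, must be a direct sum of $1\times 1$ scalar couplings. Verifying this rigidity carefully—i.e.\ that the bipartite ``coupling graph'' between the two eigenbases is a perfect matching—is the technical heart of the argument and the step I would spend the most care on.
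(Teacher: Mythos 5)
Your opening moves coincide with the paper's own proof: apply Theorem \ref{characterization in vN} to $p_1=r(a\circ b)$, simultaneously diagonalize the commuting pairs $(a_{11},b_{11})$ and $(a_{22},b_{22})$ by unitaries $u_1,u_2$, and study the coupling matrix $s_{12}=u_1^*a_{12}u_2$; your inductive route to evenness (a strict absolutely compatible pair cannot exist in odd leftover dimension, by Proposition \ref{strict}) is a legitimate alternative to the paper's rank count, which uses strictness to get $\rank(s_{12}s_{12}^*)=k$ and $\rank(s_{12}^*s_{12})=n-k$, whence $k=n-k$. However, the step you yourself single out as the technical heart contains a genuine gap: the ``rigidity'' you invoke is false. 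The identities of Theorem \ref{characterization in vN} only give $(\alpha_i+\alpha_{k+j})s_{ij}=s_{ij}=(\beta_i+\beta_{k+j})s_{ij}$, i.e.\ trace conditions \emph{where} $s_{ij}\neq0$; they do not prevent one eigenvector from coupling to several when the diagonal blocks have repeated joint eigenvalues. Concretely, take $a_1\triangle b_1$ in $\mathcal{S}\subseteq\mathbb{M}_2$, set $a=a_1\oplus a_1$ and $b=b_1\oplus b_1$ (a strict, absolutely compatible pair by Lemma \ref{l absolute compatibility in hereditary subalgebras}$(b)$). Here $p_1=r(a\circ b)$ has rank $2$, and all four diagonal blocks $a_{11},b_{11},a_{22},b_{22}$ are \emph{scalar} multiples of $1_2$, so every orthonormal basis of the top and bottom spaces is a joint eigenbasis; rotating the bottom basis by $\frac{1}{\sqrt2}\left(\begin{smallmatrix}1&1\\1&-1\end{smallmatrix}\right)$ turns the coupling matrix into $\frac{\alpha_{12}}{\sqrt2}\left(\begin{smallmatrix}1&1\\1&-1\end{smallmatrix}\right)$, with all entries nonzero, while your constraints $|a_{12}|^2=a_{22}b_{22}$ and trace $1$ continue to hold. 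In such a basis $\mathrm{span}\{\xi,\eta\}$ is invariant under neither $a$ nor $b$, the bipartite coupling graph is not a matching, and your peeling induction cannot start.

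What closes the gap in the paper is a \emph{freedom}, not a rigidity. After a permutation (possible since $\det(s_{12})\neq0$, via the Leibniz expansion) one may assume $s_{ii}\neq0$ for all $i$, which upgrades the trace conditions to $D_1+D_2=1_k=E_1+E_2$ globally; then the identities \eqref{E5}--\eqref{E7} show that $s_{12}$ is \emph{normal}, since $s_{12}s_{12}^*=(1_k-D_1)(1_k-E_1)=D_2E_2=s_{12}^*s_{12}$, and that $s_{12}$ \emph{commutes} with $D_1$ and $E_1$. Hence there is a further unitary $v$, acting inside the joint eigenspaces of $(D_1,E_1)$ and therefore leaving $D_1,E_1,D_2,E_2$ diagonal, which diagonalizes $s_{12}$; only in this rotated basis do the couplings form a perfect matching, after which the permutation into $2\times2$ blocks is immediate. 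The same repair would rescue your local version: on a joint eigenspace pair $V_{\mu,\nu}$ and $W_{1-\mu,1-\nu}$ the corresponding block $T$ of $s_{12}$ satisfies $TT^*=(1-\mu)(1-\nu)1$ and $T^*T=(1-\mu)(1-\nu)1$, a nonzero multiple of a unitary by strictness, so bases can be \emph{chosen} making $T$ diagonal, and then your invariant two-dimensional subspaces exist and the induction runs. As written, though, the proposal rests on a false claim at its crucial step, so it is not yet a proof.
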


\begin{proof} Let $\rank(a \circ b) = k$. By Theorem \ref{characterization in vN}, for $p_1 = r(a\circ b)$, there exist $a_{11}, b_{11} \in \mathbb{M}_k = p_1 \mathbb{M}_n p_1$; $a_{22}, b_{22} \in \mathbb{M}_{n-k} = (1-p_1) \mathbb{M}_{n} (1-p_1)$ and $a_{12} \in \mathbb{M}_{k,n-k}$ with
$$a_{12} a_{12}^* = (1_k - a_{11})(1_k -b_{11}),\ \ a_{11} b_{11} = b_{11} a_{11},$$
$$a_{12}^*a_{12} = a_{22}a_{22} = b_{22} a_{22},$$
$$a_{12} = a_{11}a_{12} +a_{12}a_{22} = b_{11}a_{12} +a_{12}b_{22},$$
such that $a = \left(
                 \begin{array}{cc}
                   a_{11} & a_{12} \\
                   a_{12}^* & a_{22}
                 \end{array}
               \right)
$ and $b = \left(
             \begin{array}{cc}
               b_{11} & -b_{12} \\
               -b_{12}^* & b_{22}
             \end{array}
           \right)
$. Find unitaries $u_1 \in \mathbb{M}_k$ and $u_2 \in \mathbb{M}_{n-k}$ such that $u_1^* a_{11} u_1 = D_1$, $u_1^* b_{11} u_1 = E_1$, $u_2^* a_{22} u_2 = D_2$ and $u_2^* b_{22} u_2 = E_2$ are diagonal. Put $s_{12} = u_1^* a_{12} u_2$. Then for $u = u_1 \oplus u_2$,  we have
	$$a_u = u^* a u = \left(
                     \begin{array}{cc}
                       D_1 & s_{12}\\
                       s_{12}^* & D_2
                     \end{array}
                   \right)
 \mbox{ and } b_u = u^* b u = \left(
                                \begin{array}{cc}
                                  E_1 & -s_{12}\\
                                  -s_{12}^* & E_2
                                \end{array}
                              \right)
 $$ so that
\begin{equation}\label{E5} s_{12}s_{12}^* = (1_k - D_1)(1_k -E_1);\end{equation}
\begin{equation}\label{E6} s_{12}^* s_{12} = D_2 E_2;\end{equation}
\begin{equation}\label{E7} s_{12} = D_1 s_{12} + s_{12} D_2 = E_1 s_{12} +s_{12} E_2.\end{equation}

Let $D_1 = \mbox{diag}(\alpha_1,\ldots,\alpha_k)$, $D_2 = \mbox{diag}(\alpha_{k+1},\ldots,\alpha_n)$, $E_1 = \mbox{diag}(\beta_1,\ldots,\beta_k)$ and $E_2 = \mbox{diag}(\beta_{k+1},\ldots,\beta_n)$. Since $a$ and $b$ are strict so are $a_u$ and $b_u$. Thus $\alpha_i, \beta_j \in (0,1)$, for $1 \leq i, j \leq n$. In particular, $\rank (s_{12} s_{12}^*) = k$ and $ \rank (s_{12}^* s_{12}) =n-k$. Therefore, $$\max \{k, n-k\} \leq \rank(S_{12}) \leq \min\{k, n-k\}.$$
In other words, $\rank(s_{12}) = k = n-k$, so that $n=2k$.\smallskip
	
Let us denote $s_{12} = (s_{ij})$. Then by \eqref{E5} and \eqref{E6}, we have
\begin{equation}\label{E8}
\displaystyle\sum_{j=1}^k |s_{ij}|^2 = (1- \alpha_i)(1-\beta_i), \qquad 1\leq i \leq k
\end{equation}
\begin{equation}\label{E9}
\displaystyle\sum_{i=1}^k |s_{ij}|^2 =  \alpha_{k+j} \beta_{k+j}, \qquad 1\leq j \leq k
\end{equation}
\begin{equation}\label{E10}
\displaystyle\sum_{l=1}^k s_{il}\overline{s}_{jl} = 0, \qquad i \neq j
\end{equation}
\begin{equation}\label{E11}
\displaystyle\sum_{l=1}^k \overline{s}_{li}s_{lj} = 0, \qquad i \neq j.
\end{equation}

By \eqref{E7}, we have
\begin{equation}\label{E12}
s_{ij} = (\alpha_i + \alpha_{k+j}) s_{ij} = (\beta_i + \beta_{k+j}) s_{ij}, \qquad 1 \leq i, j \leq k.
\end{equation}

Since $\rank (s_{12}) = k$, $\det(s_{12} ) \neq 0$. Also by the Leibnitz formula for the determinant, we have $$\det(s_{12}) = \sum_{\sigma \in S_k} \mbox{sgn}(\sigma) \Pi_{i=1}^k s_{\sigma(i)i},$$ we note that $\Pi_{i=1}^k s_{\sigma(i)i} \neq 0$ for some permutation $\sigma \in S_k$. Thus replacing $s_{12}$ by $P_{\sigma^{-1}} s_{12}$, and $D_1$ by $P_{\sigma^{-1}} D_1 P_{\sigma}$, we may assume that $s_{ii} \neq 0$ for $1 \leq i \leq k$.\smallskip
	
	
Now, by \eqref{E12},
\begin{equation}\label{E13}
	\alpha_i + \alpha_{k+i} = 1 = \beta_i + \beta_{k+i} \qquad 1 \leq i \leq k.
\end{equation}

Applying \eqref{E13} to \eqref{E12}, we may deduce that
\begin{equation}\label{E14}
(\alpha_i - \alpha_j) s_{ij} = 0 = (\beta_i - \beta_j) s_{ij}, \qquad 1 \leq i, j \leq k.
\end{equation}
	
Applying \eqref{E13}, we further get that $D_1 + D_2 = 1_k = E_1 + E_2$. Thus by \eqref{E5} and \eqref{E6}, we have
	$$s_{12}s_{12}^* = s_{12}^* s_{12}.$$

Similarly, by \eqref{E7}, we deduce that
$$s_{12} D_1 = D_1 s_{12} \quad \mbox{and} \quad s_{12} E_1 = E_1 s_{12}.$$

Therefore, there exists a unitary $v \in \mathbb{M}_k$ such that $v^* D_1 v$, $v^* E_1 v$ and $v^* s_{12} v$ are diagonal, say,
\begin{equation*}
D := v^* D_1 v = \mbox{diag} (\lambda_1, \ldots \lambda_k)
\end{equation*}
\begin{equation*}
E := v^* E_1 v = \mbox{diag} (\mu_1, \ldots \mu_k)
\end{equation*}
\begin{equation*}
S := v^* s_{12} v = \mbox{diag} (s_1, \ldots s_k).
\end{equation*}
It follows that
$$a_0 := (v \oplus v)^* u^* a u (v \oplus v) = \left(
                                                 \begin{array}{cc}
                                                   D & S \\
                                                   S^*  & 1_k - D
                                                 \end{array}
                                               \right)
$$ and
$$B_0 := (v \oplus v)^* u^* b u (v \oplus v) = \left(
                                                 \begin{array}{cc}
                                                   E & -S \\
                                                   -S^* & 1_k - E
                                                 \end{array}
                                               \right)
$$ with $a_0 \triangle b_0$. We can find a suitable permutation $P \in \mathbb{M}_n$ such that
$$P a_0 P = \left(
                                            \begin{array}{cc}
                                              \lambda_1 & s_1 \\
                                              \bar{s}_1 & 1 - \lambda_1
                                            \end{array}
                                          \right)
 \oplus \ldots \oplus \left(
                        \begin{array}{cc}
                          \lambda_k & s_k \\
                          \bar{s}_k & 1 - \lambda_k
                        \end{array}
                      \right)
 $$ and
$$P b_0 P = \left(
              \begin{array}{cc}
                \mu_1 & - s_1 \\
                - \bar{s}_1 & 1 - \mu_1
                \end{array}
            \right)
 \oplus \ldots \oplus \left(
                        \begin{array}{cc}
                          \mu_k & - s_k \\
                          - \bar{s}_k & 1 - \mu_k
                        \end{array}
                      \right)
 $$ so that
$$\left(
    \begin{array}{cc}
      \lambda_i & s_i \\ \bar{s}_i & 1 - \lambda_i
    \end{array}
  \right)
 \triangle \left(
             \begin{array}{cc}
               \mu_i & - s_i \\
               - \bar{s}_i & 1 - \mu_i
             \end{array}
           \right)
  \quad \mbox{for} 1 \leq i \leq k.$$
\end{proof}

\textbf{Acknowledgements} Third author partially supported by the Spanish Ministry of Economy and Competitiveness (MINECO) and European Regional Development Fund project no. MTM2014-58984-P and Junta de Andaluc\'{\i}a grant FQM375.

\end{document}